\documentclass[11pt,reqno]{amsart}

\usepackage[T1]{fontenc}
\usepackage{lmodern}
\usepackage{microtype}
\usepackage{amssymb,mathrsfs}
\usepackage{enumitem}
\usepackage{hyperref}
\usepackage[nameinlink,capitalize,noabbrev]{cleveref}

\hypersetup{
  hidelinks,
  pdftitle={Realizing prescribed entropy functions by smooth diffeomorphisms of closed manifolds of dimension at least three},
  pdfauthor={Wanshan Lin and Xueting Tian},
  pdfkeywords={entropy function, Choquet simplex, ergodic entropy spectrum,
    Katok's intermediate-entropy conjecture, isolated invariant set}
}

\numberwithin{equation}{section}
\setlist[enumerate]{leftmargin=2.4em}
\newtheorem{theorem}{Theorem}[section]
\newtheorem{proposition}[theorem]{Proposition}
\newtheorem{lemma}[theorem]{Lemma}
\newtheorem{corollary}[theorem]{Corollary}
\newtheorem*{conjecture}{Conjecture}
\theoremstyle{remark}
\newtheorem{remark}[theorem]{Remark}

\newcommand{\id}{\operatorname{id}}
\newcommand{\htop}{h_{\mathrm{top}}}
\newcommand{\Me}{\mathcal M_{\mathrm e}}
\newcommand{\Minv}{\mathcal M}
\newcommand{\He}{\mathscr H_{\mathrm e}}
\newcommand{\dd}{\,\mathrm d}

\title[Entropy functions on closed manifolds]
{Realizing Prescribed Entropy Functions\\
by Smooth Diffeomorphisms\\
of Closed Manifolds\\
of Dimension at Least Three}

\author{Wanshan Lin}
\address{School of Mathematical Science, Fudan University, Shanghai 200433, People's Republic of China}
\email{wanshanlin@fudan.edu.cn}

\author{Xueting Tian}
\address{School of Mathematical Science, Fudan University, Shanghai 200433, People's Republic of China}
\email{xuetingtian@fudan.edu.cn}

\subjclass[2020]{Primary 37C40; Secondary 37A35, 37B35, 37B40}
\keywords{Entropy function, Choquet simplex, ergodic entropy spectrum,
Katok's intermediate-entropy conjecture, isolated invariant set}

\begin{document}

\begin{abstract}
Let $M$ be a closed smooth manifold of dimension $d\geq3$.  Given a compact metrizable
Choquet simplex $\mathscr S$ and a bounded nonnegative affine upper
semicontinuous function $\mathfrak e$ on $\mathscr S$, we construct a
$C^\infty$ diffeomorphism $h$ of $M$, isotopic to $\id_M$ and supported in
an embedded $d$-dimensional solid torus $D^{d-1}\times S^1$, with an
isolated minimal invariant Cantor set $K$.
The invariant-measure simplex of $h|_K$ is affinely homeomorphic to
$\mathscr S$ with entropy function $\mathfrak e$, whereas every ergodic
$h$-invariant measure not supported on $K$ is a Dirac measure at a fixed
point.  Consequently, the set of measure-theoretic entropies of ergodic
$h$-invariant probability measures and the topological entropy of $h$ are
\[
 \He(h)=\{0\}\cup\mathfrak e(\operatorname{ex}\mathscr S),
 \qquad
 \htop(h)=\max_{p\in\mathscr S}\mathfrak e(p).
\]
The map $h$ is $C^\infty$-approximable by zero-entropy diffeomorphisms
isotopic to $\id_M$.  Taking $\mathscr S$ to be a singleton yields
counterexamples to Katok's intermediate-entropy conjecture on every such $M$.
We also construct such counterexamples $h_j$ and numbers $c_j>0$ with
$h_j\to\id_M$ in $C^\infty$, $c_j\to0$, and
\[
 \He(h_j)=\{0,c_j\},
 \qquad
 \htop(h_j)=c_j.
\]
Hence the intermediate-entropy property is not $C^\infty$ open among
diffeomorphisms isotopic to the identity.
\end{abstract}

\maketitle

\section{Introduction}

Whether positive topological entropy forces a rich supply of invariant
measures is a longstanding question.  In general topological dynamics the
answer is negative: positive-entropy strictly ergodic examples include
subshifts \cite{Grillenberger1973,HahnKatznelson1967} and homeomorphisms of
compact manifolds \cite{BeguinCrovisierLeRoux2007}.  Herman asked whether
minimal or strictly ergodic diffeomorphisms can have positive topological
entropy~\cite[p.~141]{Katok1980}.  He subsequently answered the minimality
part affirmatively by constructing a real-analytic minimal diffeomorphism of
positive entropy on a compact four-manifold \cite{Herman1981}.  These results
show that positive topological entropy is compatible with minimality and even
with strict ergodicity.  Our focus is the finer question of which values of
measure-theoretic entropy are realized by ergodic measures.

Let $Z$ be a compact metric space and $T:Z\to Z$ continuous.  We write
$\Minv(T)$ for the $T$-invariant Borel probability measures, $\Me(T)$ for
the ergodic ones, and
\[
   \He(T):=\{h_\mu(T):\mu\in\Me(T)\}
\]
for its \emph{ergodic entropy spectrum}.  The variational principle only
asserts that $\sup\He(T)=\htop(T)$; by itself it gives no intermediate-value
property for the extreme points of $\Minv(T)$.

At the topological level, the entropy function is remarkably flexible.
Downarowicz and Serafin proved that every bounded nonnegative affine upper
semicontinuous function on a compact metrizable Choquet simplex is the entropy
function on the invariant-measure simplex of a finite-alphabet minimal
Toeplitz subshift
\cite[Theorem~1]{DownarowiczSerafin2003}.  Using this realization theorem,
Konieczny, Kupsa, and Kwietniak constructed, for every Polish space $P$, a
minimal Toeplitz subshift whose space of ergodic measures, with the weak$^*$
topology, is homeomorphic to $P$ and which has a unique positive-entropy
ergodic measure
\cite[Theorem~11]{KoniecznyKupsaKwietniak2018}.

Given any such simplex--entropy-function pair, our main theorem realizes it
on an isolated minimal Cantor subsystem of a $C^\infty$ diffeomorphism of
any closed manifold of dimension at least three.  The construction is local:
in dimension $d$ it is supported in an
embedded copy of $D^{d-1}\times S^1$, and the only additional
ergodic measures are Dirac measures at fixed points and hence have zero
entropy.

This realization yields counterexamples to Katok's intermediate-entropy
conjecture.  We recall its standard formulation \cite[p.~2]{Sun2025}:

\begin{conjecture}[Katok]
For every $C^2$ diffeomorphism $f$ of a compact Riemannian manifold,
\[
       [0,\htop(f))\subseteq \He(f).
       \tag{IE}\label{IE}
\]
\end{conjecture}

In dimension one, every diffeomorphism of a compact one-dimensional manifold
has zero topological entropy, so \eqref{IE} is vacuous.  For
$C^{1+\alpha}$ surface diffeomorphisms, where $\alpha>0$, Katok's
horseshoe approximation theory implies \eqref{IE}
\cite{Katok1980}; see also
\cite[Theorem~S.5.9]{KatokHasselblatt1995}.  Thus dimension three is the
first dimension in which a counterexample can occur.

More precisely, taking $\mathscr S=\{\ast\}$ and $\mathfrak e(\ast)=H>0$ in
\cref{thm:main} yields, on every closed smooth manifold of dimension $d\geq3$,
a diffeomorphism violating \eqref{IE}; hence this dimensional range is sharp.

Moreover, using primitive renormalization near the identity
\cite[Theorem~A]{BergerGourmelonHelfter2025}, we show in
\cref{thm:near-identity} that such diffeomorphisms accumulate at $\id_M$ in the
$C^\infty$ topology.  Thus the class satisfying \eqref{IE} is not
$C^\infty$ open.  As \cref{rem:not-locally-dense} explains, this accumulation
cannot be strengthened to density in any neighborhood of $\id_M$.

Under additional dynamical hypotheses, however, \eqref{IE} holds.
For smooth systems, Sun proved a relative result for
skew products preserving an ergodic measure of positive entropy: over an
invertible ergodic zero-entropy base, nonzero
Lyapunov exponents for the $C^{1+\alpha}$ fiber maps yield ergodic measures of
every entropy up to that of the given measure \cite{Sun2010}.  For partially
hyperbolic diffeomorphisms with one-dimensional center, positive results
include absolutely partially hyperbolic diffeomorphisms of $\mathbb T^3$
homotopic to a hyperbolic toral automorphism \cite{Ures2012}.
Another result gives a $C^1$-open and dense subfamily within the open class of
systems having
hyperbolic periodic points of different indices and robustly minimal strong
foliations \cite{YangZhang2020}.  Mostly expanding partially hyperbolic
diffeomorphisms with a one-dimensional center bundle and minimal strong
stable foliation provide another class \cite{Zhang2025}; finer joint
flexibility of entropy and the center Lyapunov exponent holds when stable and
unstable blender-horseshoes coexist and the strong stable and strong unstable
foliations are both minimal \cite{DiazGelfertRamsZhang2026}.  An
equilibrium-state approach also verifies the conjecture for a class of
Ma\~n\'e diffeomorphisms \cite{Sun2021}.

For flows, every star vector field
has the intermediate-entropy property \cite{LiShiWangWang2020}; Arbieto,
Oprocha, and Rego strengthened this to \emph{entropy flexibility} by realizing
each intermediate value on a strictly ergodic subsystem having that value as
its topological entropy
\cite{ArbietoOprochaRego2025}.

A complementary topological route proceeds through ergodic universality.
Quas and Soo \cite{QuasSoo2016} and Burguet \cite{Burguet2020} obtained
universality results for systems with specification-type properties, while
Chandgotia and Meyerovitch developed a broader criterion based on flexible
marker sequences \cite{ChandgotiaMeyerovitch2021}.  Since these universality
results realize every invertible non-atomic ergodic
probability-preserving system of entropy below the topological entropy, they
imply (IE).  Applications of this universality approach give the full-interval
conclusion for left translations by Haar-almost every $g\in G$ on $G/\Gamma$,
where $G$ is a connected semisimple Lie group and $\Gamma$ is a cocompact
lattice \cite{GuanSunWu2017}, and for every affine transformation of a
nilmanifold having a periodic point \cite{HuangXuXu2021}.

A third route proceeds through weak$^{*}$ approximation of invariant measures
by ergodic measures with prescribed entropy.  If $g$
is either a transitive system with shadowing and an upper
semicontinuous entropy map \cite[Corollary~C(2)]{LiOprocha2018}, or an
asymptotically entropy expansive system with the approximate product property
\cite[Theorem~1.4(2)]{Sun2025}, then, for every
$0\leq c<\htop(g)$, the ergodic measures of entropy $c$ are residual, with
respect to the relative weak$^{*}$ topology, among the invariant measures of
entropy at least $c$.  The same residual conclusion holds for shift spaces
whose languages are edit
approachable by a collection with $(W)$-specification
\cite[Theorem~3.1]{JiChenLin2022}.
Liu, Chen, and Zhou extended Sun's approximate-product and
entropy-genericity results to continuous actions of $\mathbb Z^d$ and
$\mathbb Z_+^d$ on compact metric spaces
\cite[Theorem~1.4]{LiuChenZhou2026}.
In subsequent work, they proved that asymptotically entropy
expansive actions of countably infinite discrete amenable groups with the
specification property are entropy-generic and admit ergodic measures of every
entropy in the closed interval from zero to the topological entropy
\cite[Theorems~1.1 and~1.4]{LiuChenZhouAmenable2026}.

We now state the realization theorem announced above and its consequence for
Katok's intermediate-entropy conjecture.  Here and throughout, a
\emph{closed manifold} means a nonempty compact smooth manifold without
boundary, not necessarily connected.
For $d\geq2$, a
\emph{$d$-dimensional solid torus} means $D^{d-1}\times S^1$ up to
diffeomorphism, where $D^{d-1}$ is the closed $(d-1)$-disk and
$S^1=\mathbb R/(2\pi\mathbb Z)$.

A compact invariant set $C$ of a homeomorphism $T:Z\to Z$ is
\emph{isolated} if there is an open neighborhood $U$ of $C$ such that
\[
       C=\bigcap_{n\in\mathbb Z}T^n(U).
\]

For a self-map $g:X\to X$, write
\[
 \operatorname{Fix}(g)=\{x\in X:g(x)=x\}.
\]

For a diffeomorphism $g$ of a smooth manifold $X$, write
\[
 \operatorname{supp}(g)
 =\overline{\{x\in X:g(x)\ne x\}},
\]
where the closure is taken in $X$.

If $\psi:X\to Y$ is a Borel map and $\mu$ is a Borel probability measure on
$X$, we write $\psi_*\mu$ for the pushforward measure, defined by
\[
 (\psi_*\mu)(B)=\mu\bigl(\psi^{-1}(B)\bigr)
\]
for every Borel set $B\subset Y$.  For a family $\mathcal A$ of probability
measures, we use the convention
\[
 \psi_*\mathcal A=\{\psi_*\mu:\mu\in\mathcal A\}.
\]

A convex subset $\mathcal F$ of a convex set $\mathcal C$ is called a
\emph{face} if, whenever $x,y\in\mathcal C$ and $0<t<1$ satisfy
\[
 tx+(1-t)y\in\mathcal F,
\]
one has $x,y\in\mathcal F$.  It is a \emph{closed face} if it is also closed
in $\mathcal C$.  For convex sets of probability measures, closedness is
always understood with respect to the weak$^*$ topology.

For every closed smooth manifold $M$, equip
$\operatorname{Diff}^\infty(M)$ with the $C^\infty$ topology.  We write
$\operatorname{Diff}_0^\infty(M)$ for the subgroup of diffeomorphisms smoothly
isotopic to $\id_M$; explicitly, $f\in\operatorname{Diff}_0^\infty(M)$ if
there exists a family $(f_t)_{t\in[0,1]}\subset\operatorname{Diff}^\infty(M)$
such that $(t,x)\mapsto f_t(x)$ is smooth, $f_0=\id_M$, and $f_1=f$.

\subsection{Main results}

\begin{theorem}
\label{thm:main}
Let $M$ be a closed smooth manifold of dimension $d\geq3$.  Let
$\mathscr S$ be a nonempty
compact metrizable Choquet simplex, and write $\operatorname{ex}\mathscr S$
for its set of extreme points.  Let
$\mathfrak e:\mathscr S\to[0,\infty)$ be bounded, affine, and upper
semicontinuous.  There are an embedded $d$-dimensional solid torus
$N\subset M$, a
diffeomorphism $h\in\operatorname{Diff}_0^\infty(M)$, and a minimal invariant
Cantor set $K\subset\operatorname{int}N$ with the following properties.

\begin{enumerate}[label=\textup{(\roman*)}]
\item The map $h$ is the identity on $M\setminus\operatorname{int}N$, the set
      $K$ is isolated, and $K\cap\operatorname{Fix}(h)=\varnothing$.  Every
      $h$-invariant probability is supported on
      $\operatorname{Fix}(h)\cup K$.  The set
      \[
         \mathcal F_K:=\{\mu\in\Minv(h):\mu(K)=1\}
      \]
      is a closed face of $\Minv(h)$, and there is an affine homeomorphism
      $\Phi:\mathscr S\to\mathcal F_K$ such that
      \[
          h_{\Phi(p)}(h)=\mathfrak e(p)
          \qquad(p\in\mathscr S).
      \]
\item The ergodic measures, ergodic entropy spectrum, and topological entropy
      are
      \[
      \Me(h)=\{\delta_z:z\in\operatorname{Fix}(h)\}
              \cup\Phi(\operatorname{ex}\mathscr S),
      \]
      \[
          \He(h)=\{0\}\cup\mathfrak e(\operatorname{ex}\mathscr S),
          \qquad
          \htop(h)=\max_{p\in\mathscr S}\mathfrak e(p).
      \]
\item There is a sequence $(h_n)_{n\geq1}$ in
      $\operatorname{Diff}_0^\infty(M)$, each equal to the identity on
      $M\setminus\operatorname{int}N$, such
      that
      \[
          h_n\longrightarrow h
          \quad\text{in the $C^\infty$ topology},
          \qquad
          \htop(h_n)=0
          \quad(n\geq1).
      \]
\end{enumerate}
\end{theorem}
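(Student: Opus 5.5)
The plan is to obtain $h$ as a $C^\infty$ limit of zero-entropy diffeomorphisms $h_n$ supported in a solid torus $N$. The limit will contain an isolated invariant Cantor set $K$ on which $h$ is topologically conjugate to a symbolic model realizing $(\mathscr S,\mathfrak e)$.

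\textbf{Step 1: symbolic model.} Apply \cite[Theorem~1]{DownarowiczSerafin2003} to obtain a minimal Toeplitz subshift $(X,\sigma)$ and an affine homeomorphism $\Phi_0:\mathscr S\to\Minv(\sigma)$ with $h_{\Phi_0(p)}(\sigma)=\mathfrak e(p)$. I will use the Toeplitz structure: $X$ is an almost one-to-one extension of an odometer with periods $q_1\mid q_2\mid\cdots$. Hence $X$ has a nested sequence of clopen Kakutani--Rokhlin towers $\mathcal P_n$. Each $\mathcal P_n$ has finitely many columns of height $q_n$, labelled by admissible words, and the combinatorics of $\mathcal P_{n+1}$ over $\mathcal P_n$ is a finite amount of data.

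\textbf{Step 2: geometric model by approximation by conjugation.} Fix an embedded solid torus $N=D^{d-1}\times S^1\subset M$.

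\begin{enumerate}
\item At stage $n$, realize $\mathcal P_n$ geometrically. Each atom of $\mathcal P_n$ becomes a small closed ball $B^n_w\subset\operatorname{int}N$. These balls are arranged in $q_n$ levels around the $S^1$ direction.
\item Set $h_n=H_n\circ R_n\circ H_n^{-1}$. Here $R_n$ is a rotation-like flow map along $S^1$ by $2\pi/q_n$, damped to the identity near $\partial N$ by a bump function, and $H_n$ is a compactly supported diffeomorphism.
\item Choose $H_n$ so that $h_n(B^n_w)=B^n_{w'}$ whenever $w\mapsto w'$ within a column. At the top of a column, $h_n$ maps the top ball onto the base of the next column according to the combinatorics of $\mathcal P_n$.
\item Each ball $B^n_w$ contains the balls $B^{n+1}_v$ with $v$ extending $w$. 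These sub-balls must be permuted inside $B^n_w$ as dictated by $\mathcal P_{n+1}$.
\item $H_{n+1}=H_n\circ G_{n+1}$, where $G_{n+1}$ commutes with $R_n$ and realizes that permutation. Dimension $d-1\ge2$ of the disk factor is exactly what allows an arbitrary permutation of disjoint sub-balls by a compactly supported isotopy (via braids in a disk of dimension $\ge2$).
\item Choose $R_{n+1}$ with rotation number close enough to that of $R_n$ that $\|h_{n+1}-h_n\|_{C^n}<2^{-n}$. This is the Anosov--Katok mechanism: the derivatives of $H_{n+1}$ are fixed before $R_{n+1}$ is chosen.
\end{enumerate}

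Then $h=\lim h_n$ exists in $C^\infty$, lies in $\operatorname{Diff}^\infty_0(M)$, and is the identity off $\operatorname{int}N$. Each $h_n$ has zero topological entropy, since it is smoothly conjugate to a rotation-type map. This gives (iii).

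\textbf{Step 3: the Cantor set and the dynamics off it.} Set
\[
K=\bigcap_n\bigcup_w B^n_w.
\]
Choose the ball diameters to tend to $0$. Then $K$ is a Cantor set, and the coding $x\mapsto(w_n(x))_n$ conjugates $h|_K$ to $\sigma$.

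Off $K$ I must force all dynamics to collapse to $\operatorname{Fix}(h)$. The plan is to build into each $G_n$, in the gaps between sub-balls, a gradient-like push toward a fixed set. Concretely:
\begin{enumerate}
\item Modify $h_n$ so that every $\omega$- and $\alpha$-limit of a point in $\bigcup_w B^n_w\setminus\bigcup_v B^{n+1}_v$ lies in $\operatorname{Fix}(h)$.
\item Make this modification a $C^n$-small perturbation that is preserved in the limit.
\end{enumerate}
This yields that $K$ is isolated, with isolating neighbourhood a slightly thickened $\bigcup_w B^1_w$, and that $K\cap\operatorname{Fix}(h)=\varnothing$.

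By the Poincaré recurrence theorem every invariant measure is supported on the nonwandering set, and Step 3 gives that this set is $\operatorname{Fix}(h)\cup K$. Since $K$ is invariant and disjoint from $\operatorname{Fix}(h)$, the set $\mathcal F_K$ is a closed face. Take
\[
\Phi=(\text{conjugacy})^{-1}_*\circ\Phi_0 .
\]
Entropy is invariant under conjugacy, so $h_{\Phi(p)}(h)=\mathfrak e(p)$, which is (i).

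Part (ii) follows from (i). Every ergodic measure lives either on $\operatorname{Fix}(h)$, where it is a Dirac mass, or on $K$. The ergodic measures on $K$ are the extreme points of $\mathcal F_K$, namely $\Phi(\operatorname{ex}\mathscr S)$. The formula for $\htop(h)$ follows from the variational principle, with the supremum attained because $\mathfrak e$ is upper semicontinuous on a compact set.

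\textbf{Main obstacle.} The hardest step is the simultaneous control in Step 2/3. Positive entropy forces exponentially many sub-balls per level, which forces huge derivatives of $G_{n+1}$. These must be compensated by choosing the new rotation nearly equal to the old one, and the gradient-like modifications that kill all non-$K$ recurrence must survive this limit without creating new recurrent points. I would first try to handle this by choosing the rotation-number increments, and then the isolating dissipation inside each stage, superexponentially small relative to $\|H_{n+1}\|_{C^{n+1}}$. I would then verify the nonwandering set by a stage-by-stage trapping argument.
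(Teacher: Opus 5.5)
Your route, an Anosov--Katok limit of conjugates of rotations, is entirely different from the paper's. As written it has two genuine gaps.

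\textbf{First gap: the coding claim in Step~3 conflicts with the convergence mechanism of Step~2.} Let $\delta_n>0$ be the minimal distance between distinct stage-$n$ balls, and let $\pi_n$ be the permutation of these balls induced by $h_n$. Take $x\in K\cap B^n_w$. Then $h(x)$ lies in some stage-$n$ ball, while $h_n(x)\in B^n_{\pi_n(w)}$. So whenever $\lVert h-h_n\rVert_{C^0}<\delta_n$, the limit maps every $K\cap B^n_w$ into $B^n_{\pi_n(w)}$.

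Suppose this happens for infinitely many $n$. Since the ball diameters tend to $0$, $h|_K$ then preserves clopen partitions of arbitrarily small mesh, so it is equicontinuous. It therefore has zero entropy and cannot be conjugate to any infinite (hence expansive) subshift.

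So you need $\lVert h-h_n\rVert_{C^0}\geq\delta_n$ for all large $n$. In other words, later corrections must carry points of the top balls across the stage-$n$ gaps into the correct, varying base balls. The Anosov--Katok step you describe does the opposite. After $H_{n+1}$ is fixed, $R_{n+1}$ is chosen so close to $R_n$ that the increment is as small as desired. Making increments superexponentially small, as you propose, naturally yields $\sum_{k\geq n}\lVert h_{k+1}-h_k\rVert_{C^0}<\delta_n$.

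Relatedly, if $R_n$ is literally rotation by $2\pi/q_n$ where the damping is inactive, then $h_n^{q_n}=\id$ there and every column closes on itself. That contradicts your third item in Step~2.

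Nothing in the proposal produces corrections that are $C^0$-large across the gaps yet $C^n$-small. Hence the conjugacy of $h|_K$ to $\sigma$ is unsupported.

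\textbf{Second gap: Step~3 has no mechanism.} This step is the heart of the theorem: isolation of $K$, $K\cap\operatorname{Fix}(h)=\varnothing$, and the support of every invariant measure. Nonwandering sets are only upper semicontinuous, so trapping properties of the $h_n$ say nothing about $h$. For example, the Morse--Smale time-$t$ maps in \cref{rem:not-locally-dense} have finite nonwandering set yet converge to $\id_M$. Points in the gaps near $K$ are periodic for every unmodified $h_n$, but under $h$ they must accumulate only on $\operatorname{Fix}(h)$. Any dissipative modification destroys the conjugacy to a rotation, and both your claim $\htop(h_n)=0$ and your closeness estimates rest on that conjugacy.

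\textbf{The missing idea is a global Lyapunov function.} This is what lets the paper avoid any limiting procedure for $h$:
\begin{enumerate}
\item The Toeplitz subshift is placed in a genuine horseshoe of a disk-supported surface map $f$ (\cref{lem:horseshoe-embedding}).
\item Smale's theorem gives a relative-boundary isotopy $I_s$ from $\id$ to $f$, which is thickened to a $(d-1)$-disk.
\item On this disk times $S^1$ one sets $H=R\circ F$, where $F(x,\theta)=(I_{\sin^2\theta}(x),\theta)$. Here $R$ is a fiber drift whose displacement vanishes only on a region fixed by the whole isotopy, on $\theta\in\{0,\pi\}$, and on $K=X\times\{\pi/2\}$.
\item Then $L=-\cos\theta$ satisfies $L\circ H\geq L$, with equality set exactly $\operatorname{Fix}(H)\sqcup K$.
\end{enumerate}
Consequently \cref{lem:coboundary} confines every invariant measure to this set, and \cref{lem:local-equality-isolation} isolates $K$. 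Perturbing the drift by $t_n\downarrow0$ removes $K$ from the equality set. The same coboundary argument then gives zero-entropy approximants $H_n\to H$.
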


The following corollary is an immediate consequence of \cref{thm:main}.

\begin{corollary}
\label[corollary]{cor:compact-spectra}
Let $M$ be any closed smooth manifold of dimension at least three.
For every nonempty compact set
$A\subset[0,\infty)$, there exists
$h\in\operatorname{Diff}_0^\infty(M)$ such that
\[
       \He(h)=\{0\}\cup A,
       \qquad
       \htop(h)=\max A.
\]
\end{corollary}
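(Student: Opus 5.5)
The plan is to apply \cref{thm:main} to a suitably chosen pair $(\mathscr S,\mathfrak e)$ built from $A$. Since $A\subset[0,\infty)$ is nonempty and compact, it is a compact metrizable space. Let $\mathscr S=\mathcal P(A)$ be the space of Borel probability measures on $A$ with the weak$^*$ topology. This is a compact metrizable Choquet simplex (a Bauer simplex) whose extreme points are exactly the Dirac measures $\delta_a$, $a\in A$, and $a\mapsto\delta_a$ is a homeomorphism of $A$ onto $\operatorname{ex}\mathscr S$. Define
\[
 \mathfrak e(p)=\int_A t\dd p(t)\qquad(p\in\mathscr S).
\]

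Next I verify the hypotheses of \cref{thm:main}. The map $\mathfrak e$ is affine because integration is linear in $p$. It is bounded, with $0\le\mathfrak e\le\max A$, since $A\subset[0,\max A]$. It is in fact continuous, hence upper semicontinuous, because $t\mapsto t$ is a continuous bounded function on the compact set $A$. So \cref{thm:main} applies and yields $h\in\operatorname{Diff}_0^\infty(M)$.

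It remains to read off the conclusions from part (ii) of \cref{thm:main}. First,
\[
 \He(h)=\{0\}\cup\mathfrak e(\operatorname{ex}\mathscr S)=\{0\}\cup\{\mathfrak e(\delta_a):a\in A\}=\{0\}\cup A .
\]
Second, $\htop(h)=\max_{p}\mathfrak e(p)$. Since $\mathfrak e(p)\le\max A$ for every $p$, with equality at $p=\delta_{\max A}$, this maximum equals $\max A$.

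No step is a genuine obstacle. The only point that needs a citation is the standard fact that $\mathcal P(A)$ is a metrizable Choquet simplex with $\operatorname{ex}\mathcal P(A)=\{\delta_a\}$. If one wishes to avoid it, an alternative is to take $\mathscr S$ to be any simplex whose extreme boundary is closed and homeomorphic to $A$; the probability-measure simplex is simply the canonical choice.
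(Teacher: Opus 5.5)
Your proposal is correct and matches the paper's proof: the paper also takes $\mathscr S=\mathcal P(A)$ and $\mathfrak e(\lambda)=\int_A a\,\dd\lambda(a)$. It then notes $\operatorname{ex}\mathscr S=\{\delta_a:a\in A\}$, so that $\mathfrak e(\operatorname{ex}\mathscr S)=A$ and $\max_{\mathscr S}\mathfrak e=\max A$, and applies \cref{thm:main}(ii).
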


The diffeomorphism in \cref{cor:compact-spectra} may be chosen with the
localized support, isolated minimal Cantor set, and zero-entropy approximation
in \cref{thm:main}.  For $A=\{H\}$, where $H>0$, take
$\mathscr S=\{\ast\}$ and
$\mathfrak e(\ast)=H$.  The face $\mathcal F_K$ then consists of one measure
$\nu_H$.  Thus
\[
       \Me(h)=\{\delta_z:z\in\operatorname{Fix}(h)\}\cup\{\nu_H\},
       \qquad
       \He(h)=\{0,H\},
       \qquad
       \htop(h)=H,
\]
where $\nu_H(K)=1$ and $h_{\nu_H}(h)=H$.  Hence no $a\in(0,H)$ is the
entropy of an ergodic measure.

Thus Katok's intermediate-entropy conjecture fails on every closed smooth
manifold of dimension at least three.

Moreover, counterexamples whose ergodic entropy spectrum consists of two
points accumulate at $\id_M$ in the $C^\infty$ topology, with their positive
entropy values tending to zero.

\begin{theorem}
\label{thm:near-identity}
Let $M$ be a closed smooth manifold of dimension $d\geq3$.  There
are an embedded $d$-dimensional solid torus $N\subset M$, diffeomorphisms
$h_j\in\operatorname{Diff}_0^\infty(M)$, and numbers $c_j>0$ such that
$h_j\to\id_M$ in the $C^\infty$ topology and $c_j\to0$.
For every $j$, the map $h_j$ is the identity on
$M\setminus\operatorname{int}N$, and there is an isolated minimal invariant
Cantor set $K_j\subset\operatorname{int}N$ carrying a unique invariant
probability $\nu_j$.  Moreover,
\[
 \begin{aligned}
 \Me(h_j)&=\{\delta_z:z\in\operatorname{Fix}(h_j)\}\cup\{\nu_j\},
 & h_{\nu_j}(h_j)&=c_j,\\
 \He(h_j)&=\{0,c_j\},
 & \htop(h_j)&=c_j.
 \end{aligned}
\]
Consequently, the set of $C^\infty$ diffeomorphisms satisfying
\eqref{IE} is not open in the $C^\infty$ topology.
\end{theorem}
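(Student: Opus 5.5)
We will obtain \cref{thm:near-identity} from \cref{thm:main} with $\mathscr S=\{\ast\}$, combined with a rescaling step that makes both the map and its entropy small. The intended mechanism is the primitive renormalization near the identity from \cite[Theorem~A]{BergerGourmelonHelfter2025}. We read that result as saying that a suitable iterate of a map $C^\infty$-close to $\id$, restricted to a small domain and conjugated by an affine rescaling, can realize any prescribed diffeomorphism of the model domain. We use it in the reverse direction.

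\textbf{Step 1 (model map).} Fix $H>0$. Apply \cref{thm:main} to the solid torus $D^{d-1}\times S^1$ itself; the closed-manifold hypothesis is not essential, since the construction is supported in a solid torus. This yields a diffeomorphism $g$ of the model, equal to the identity near the boundary, with an isolated minimal Cantor set $K$. The set $K$ carries a unique invariant measure $\nu$ with $h_\nu(g)=H$, and every other ergodic measure is a Dirac mass at a fixed point.

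\textbf{Step 2 (renormalization).} For each $j$, we want $h_j$ supported in a fixed solid torus $N\subset M$, with $h_j\to\id_M$ in $C^\infty$, and with a small solid torus $N_j\subset N$ together with an integer $q_j\to\infty$ such that the following hold.
\begin{enumerate}
\item The sets $h_j^i(N_j)$, for $0\le i<q_j$, are pairwise disjoint.
\item $h_j^{q_j}|_{N_j}$ is conjugate to $g$.
\item Outside the orbit $\bigcup_i h_j^i(N_j)$, every ergodic measure is a fixed-point Dirac mass.
\end{enumerate}
Given these properties, set $K_j=\bigcup_i h_j^i(K)$. Then $K_j$ is an isolated minimal Cantor set. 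Its invariant measures correspond bijectively to the $g$-invariant measures on $K$ by averaging over the tower, so $\nu_j$ is unique. Abramov's formula gives $h_{\nu_j}(h_j)=H/q_j=:c_j\to0$. The variational principle then gives $\htop(h_j)=c_j$, and $\He(h_j)=\{0,c_j\}$.

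A cleaner way to arrange the tower in Step 2 is to take a slow rotation in the $S^1$ direction of $N$. Choose it to be the identity near $\partial N$, with rotation number $1/q_j$ on a subtorus. Then compose with a small copy of $g$, of size $\varepsilon_j$, inserted on one fundamental domain of the rotation. The \emph{main obstacle} is $C^\infty$ smallness. The inserted copy of $g$ has derivatives of order $k$ roughly bounded by $\varepsilon_j^{1-k}$ times those of $g$, so it is not small. This is exactly what \cite[Theorem~A]{BergerGourmelonHelfter2025} resolves: it writes $g$ as a renormalized iterate of maps $C^\infty$-close to the identity, with the perturbation spread along the $q_j$ steps of the orbit. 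I would invoke that theorem directly. I would then check that the renormalizing chart and the remaining dynamics introduce only fixed points, or measures carried by fixed points, outside the tower. This requires choosing the ambient dynamics to be a gradient-like or rotation map whose nonwandering set outside the tower is fixed.

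\textbf{Step 3 (non-openness).} Since $h_j\to\id_M$ and $\id_M$ satisfies \eqref{IE} vacuously (its topological entropy is $0$), every $C^\infty$ neighborhood of $\id_M$ contains some $h_j$ violating \eqref{IE}, because $(0,c_j)\not\subset\He(h_j)$. Hence the set of diffeomorphisms satisfying \eqref{IE} is not open.
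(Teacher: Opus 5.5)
There is a genuine gap, and it comes from the order in which you combine the two ingredients: you build the Lyapunov-coupled model $g$ first and renormalize it afterwards.

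The Berger--Gourmelon--Helfter theorem only gives you some $q_j$ near the identity with $q_j^{N_j}\circ\psi_j=\psi_j\circ g$ and pairwise disjoint levels. It says nothing about the dynamics of $q_j$ off the invariant tower $\bigcup_{i<N_j}q_j^i(\psi_j(\cdot))$. The map is an output of the theorem, so you cannot ``choose the ambient dynamics to be gradient-like or a rotation''. Your property (3) is therefore unsupported. Without it, nothing excludes further ergodic measures of positive entropy, or $\htop(h_j)>c_j$.

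Even inside the tower, your construction contradicts the required formula for $\Me(h_j)$. The map $g$ has a large fixed-point set: the boundary collar, the slices $\theta\in\{0,\pi\}$, and everything outside the solid torus once you place it in a ball. Because the levels are disjoint, each such fixed point $z$ gives a point $\psi_j(z)$ of exact period $N_j\geq2$. Its orbit measure is ergodic but is not a Dirac mass at a fixed point. The slow-rotation variant has the same defect on the whole subtorus, in addition to the $C^\infty$-size problem you already noted.

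The missing idea is that the circle-drift construction of \cref{prop:localized-isolation} is robust and must come last. It accepts any disk diffeomorphism isotopic to the identity relative to the boundary, together with any compact invariant set $X$ without fixed points. The function $L=-\cos\theta$ then forces every invariant measure onto $\operatorname{Fix}\cup(X\times\{\pi/2\})$, whatever else the disk map does. The paper therefore renormalizes in dimension two and only then couples:
\begin{enumerate}
\item It applies Berger--Gourmelon--Helfter to the horseshoe map $G$ carrying the uniquely ergodic Toeplitz Cantor set $Z$.
\item It forms the tower $X_j$ over $\psi_j(Z)$ alone (\cref{lem:cyclic-renormalization}), whose measure has entropy $H_*/N_j$.
\item It proves $N_j\to\infty$ from a hyperbolic fixed point of $G$. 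You simply assert this. For your $g$, whose fixed points are not hyperbolic, you would need another argument, e.g.\ a Lipschitz bound on entropy.
\item It extends $q_j$ to a fixed larger disk, equal to the identity on a fixed collar (\cref{lem:relative-extension}). The renormalizing map need not be the identity near the boundary, which you also need in order to extend to $M$ and build the isotopy.
\item Only then does it thicken and add the drift, with $\rho_j^{-1}(0)=\kappa(X_j)$ and drift size $\varepsilon_j$ small enough that $H_j\to\id$ (\cref{lem:small-localized-implantation}).
\end{enumerate}
This turns all the uncontrolled dynamics of $\widehat q_j$ into fixed or wandering points. That includes the period-$N_j$ orbits and the positive-entropy tower over the rest of the horseshoe.

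Your Step 3 is correct and agrees with the paper.
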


\begin{remark}
\label[remark]{rem:not-locally-dense}
The conclusion of \cref{thm:near-identity} cannot be strengthened to density
of the counterexamples in a $C^\infty$ neighborhood of $\id_M$.  Indeed,
every such $M$ admits a Morse--Smale gradient-like vector field $X$; see
Smale~\cite[Theorems~A and~B]{Smale1961Gradient}.  Let
$(\varphi_X^t)_{t\in\mathbb R}$ be its flow.  Since $X$ is Morse--Smale, it
has finitely many equilibria, all hyperbolic.  Here gradient-like means that
$X$ admits a smooth function $V\colon M\to\mathbb R$ satisfying
$dV_x(X(x))>0$ whenever $X(x)\ne0$.  Hence
$V(\varphi_X^t(x))>V(x)$ for every nonsingular $x$ and every $t>0$, so every
nonsingular point is wandering for $\varphi_X^t$.  Thus its nonwandering set
is precisely the set of equilibria of $X$.  For every $t>0$, these equilibria
are hyperbolic fixed points of the time-$t$ map, and their stable and unstable
manifolds are those of the flow.  Hence, for every pair of equilibria $p,q$,
the unstable manifold of $p$ and the stable manifold of $q$ intersect
transversely.  Consequently, $\varphi_X^t$ is a Morse--Smale diffeomorphism
for every $t>0$.  Smooth dependence of the flow on time gives
\[
 \varphi_X^t\longrightarrow\id_M
 \quad\text{in the $C^\infty$ topology as }t\downarrow0.
\]

Given a $C^\infty$ neighborhood $\mathcal U$ of $\id_M$, choose $t>0$ with
$\varphi_X^t\in\mathcal U$.  Morse--Smale diffeomorphisms are $C^1$
structurally stable \cite{PalisSmale1970}; see also
\cite[p.~273]{FranksShub1981}.  Let $\mathcal W$ be a $C^1$
structural-stability neighborhood of $\varphi_X^t$.  Then
\[
 \mathcal V=\mathcal U\cap\mathcal W\cap\operatorname{Diff}^\infty(M)
\]
is a nonempty $C^\infty$ open set whose elements are topologically conjugate
to $\varphi_X^t$.  Since a Morse--Smale diffeomorphism has finite nonwandering
set, every element of $\mathcal V$ has topological entropy zero and therefore
satisfies \eqref{IE} vacuously.  Thus
the counterexamples are not dense in any neighborhood of $\id_M$.
\end{remark}

\medskip
\noindent\textbf{Proof strategy and organization.}
For \cref{thm:main}, the Downarowicz--Serafin realization theorem supplies an
infinite minimal Toeplitz subshift whose invariant-measure simplex and entropy
function realize the prescribed pair $(\mathscr S,\mathfrak e)$.  We embed this
subshift as an invariant Cantor subsystem of an orientation-preserving $C^\infty$ surface
diffeomorphism whose support and horseshoe both lie in the interior of a
two-dimensional disk; hence the resulting disk map is the identity near the
boundary.  Smale's contractibility theorem for relative-boundary square
diffeomorphisms, together with its smooth-parametric form
\cite[Theorems~B and~4]{Smale1959}, provides an isotopy on a square inside the
disk from the identity to the disk map, relative to the boundary; we thicken
this isotopy in $d-3$ auxiliary directions.

Coupling the resulting $(d-1)$-dimensional disk dynamics to a
controlled drift in an additional circle coordinate produces a local map $H$
on $D^{d-1}\times S^1$ and the Lyapunov function $L=-\cos\theta$ with
\[
 L\circ H\geq L,
 \qquad
 \{L\circ H=L\}=K\sqcup\operatorname{Fix}(H).
\]
The set $K$ is compact and $H$-invariant, and $H|_K$ is conjugate to the
original Toeplitz system.  Thus the invariant-measure simplex of $H|_K$ and
its entropy function realize $(\mathscr S,\mathfrak e)$.  Integrating the
nonnegative coboundary $L\circ H-L$ against an $H$-invariant probability
forces its support to lie in this equality set.  Since $L|_K\equiv0$ and $K$
is separated from the fixed-point set, the local equality-set criterion shows
that $K$ is isolated.  Moreover, \cref{prop:localized-isolation}, together
with the preceding conjugacy and \cref{lem:entropy-source}, gives the local
ergodic-measure classification and the exact formulas
$\He(H)=\{0\}\cup\mathfrak e(\operatorname{ex}\mathscr S)$ and
$\htop(H)=\max_{p\in\mathscr S}\mathfrak e(p)$.  Drift perturbations with
parameters tending to zero
give $H_n\to H$ in $C^\infty$ with
$\{L\circ H_n=L\}=\operatorname{Fix}(H_n)$;
the same support argument and the variational principle then yield
$\htop(H_n)=0$.

A fixed embedding of the solid torus into $M$, together with
the common boundary collar, allows the local maps and isotopies to be extended
by the identity.  Denote the resulting global maps by $h$ and $h_n$.  Then
$h_n\to h$ in $C^\infty$, and all these maps are smoothly isotopic to $\id_M$.
The image of $K$, still denoted by $K$, remains isolated, and
$\mathcal F_K\subset\Minv(h)$ is a closed face affinely homeomorphic to
$\mathscr S$.  The invariance of the solid torus transfers the local
ergodic-measure classification and entropy formulas from $H$ to $h$, with the
fixed exterior adding only zero-entropy Dirac measures to $\Me(h)$.  For each
$h_n$, every ergodic invariant probability is either transported from an
ergodic $H_n$-invariant probability or is a Dirac mass in the fixed exterior.
Since $\htop(H_n)=0$, the ergodic variational principle gives $\htop(h_n)=0$.

For \cref{cor:compact-spectra}, take $\mathscr S=\mathcal P(A)$, where
$\mathcal P(A)$ is the simplex of Borel probability measures on $A$, and set
$\mathfrak e(\lambda)=\int_A a\,\dd\lambda(a)$; then
$\operatorname{ex}\mathcal P(A)=\{\delta_a:a\in A\}$ gives
$\mathfrak e(\operatorname{ex}\mathscr S)=A$.

For \cref{thm:near-identity}, fix $H_*>0$.  Applying
\cref{lem:entropy-source} to the one-point simplex with entropy function equal
to $H_*$ gives an infinite
minimal uniquely ergodic Toeplitz subshift whose unique invariant measure has
entropy $H_*$.  We embed this subshift as a $G$-invariant Cantor set $Z$ inside
a horseshoe of a fixed orientation-preserving disk diffeomorphism $G:D\to D$;
the ambient horseshoe also contains a hyperbolic fixed point.  Applying the
primitive renormalization theorem of Berger, Gourmelon, and Helfter
\cite[Definition~1.1 and Theorem~A]{BergerGourmelonHelfter2025} along a
shrinking $C^\infty$ neighborhood basis gives $q_j\to\id_D$ in $C^\infty$,
return times $N_j\geq2$, and smooth embeddings $\psi_j:D\to D$ such that
\[
 q_j^{N_j}\circ\psi_j=\psi_j\circ G,
 \qquad
 q_j^i\bigl(\psi_j(D)\bigr)\cap\psi_j(D)=\varnothing
 \quad(0<i<N_j).
\]
Characteristic-polynomial invariance under derivative conjugacy at this
hyperbolic fixed point, together with $q_j\to\id_D$, forces $N_j\to\infty$.

\Cref{lem:relative-extension} then supplies diffeomorphisms $\widehat q_j$ on
a single fixed larger disk that preserve the displayed renormalization data,
equal the identity on a fixed boundary collar, and still converge to the
identity in $C^\infty$.  The preserved return data
give an $N_j$-level cycle of $\psi_j(Z)$, from which
\cref{lem:cyclic-renormalization} builds a minimal uniquely ergodic tower.  Its
invariant measure assigns
mass $1/N_j$ to each level, and the return after $N_j$ iterates on every level
is conjugate to $G|_Z$; hence the entropy formula for iterates gives
$H_*/N_j\to0$.

\Cref{lem:small-localized-implantation} turns these towers into solid-torus
diffeomorphisms converging to the identity.  A fixed embedding into $M$ and
extension by the identity then gives $h_j\to\id_M$ in $C^\infty$.  The
pointwise fixed complement supplies the entropy value $0$, while the unique
tower measure supplies $H_*/N_j$.  The Lyapunov equality-set support argument
shows that every other ergodic measure is a fixed-point Dirac mass.  Hence
$\He(h_j)=\{0,H_*/N_j\}$, and the variational principle gives
$\htop(h_j)=H_*/N_j\to0$.

\Cref{sec:inputs} supplies the symbolic, smooth, and measure-theoretic inputs.
\Cref{sec:localization} develops the solid-torus construction and proves
\cref{thm:main,cor:compact-spectra}, while \cref{sec:near-identity} proves
\cref{thm:near-identity}.

\section{Preliminaries}
\label{sec:inputs}

We record four inputs: a Toeplitz realization of the prescribed entropy data,
a disk-supported smooth horseshoe carrying the resulting subshift, a smooth
function with a prescribed zero set, and the support principle for the
nonnegative-coboundary argument in \cref{sec:localization}.

\subsection{Toeplitz realizations of entropy functions}

For a finite alphabet $\mathcal A$, let
$\sigma:\mathcal A^{\mathbb Z}\to\mathcal A^{\mathbb Z}$ denote the left
shift, $(\sigma y)_n=y_{n+1}$.  A \emph{two-sided subshift} is a closed set
$Y\subset\mathcal A^{\mathbb Z}$ satisfying $\sigma(Y)=Y$.

\begin{lemma}
\label[lemma]{lem:entropy-source}
Let $\mathscr S$ be a nonempty compact metrizable Choquet simplex, and let
$\mathfrak e:\mathscr S\to[0,\infty)$ be bounded, affine, and upper
semicontinuous.  There are an integer $k\geq2$, a minimal two-sided Toeplitz
subshift $Y\subset\{1,\dots,k\}^{\mathbb Z}$ with no isolated points, and an
affine homeomorphism
\[
       \phi:\mathscr S\longrightarrow\Minv(\sigma|_Y)
\]
such that
\[
       h_{\phi(p)}(\sigma|_Y)=\mathfrak e(p)
       \qquad(p\in\mathscr S).
\]
Moreover,
\[
       \phi(\operatorname{ex}\mathscr S)=\Me(\sigma|_Y),
       \qquad
       \htop(\sigma|_Y)=\max_{p\in\mathscr S}\mathfrak e(p).
\]
\end{lemma}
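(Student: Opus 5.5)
The plan is to read the statement off the Downarowicz--Serafin realization theorem \cite[Theorem~1]{DownarowiczSerafin2003} and then justify the extra conclusions by standard facts. That theorem gives a minimal Toeplitz subshift $Y$ over a finite alphabet and an affine homeomorphism $\phi:\mathscr S\to\Minv(\sigma|_Y)$ with $h_{\phi(p)}(\sigma|_Y)=\mathfrak e(p)$. The alphabet can be taken to be $\{1,\dots,k\}$ after relabeling symbols. If the supplied alphabet has only one symbol, we enlarge it to $\{1,2\}$, which does not change $Y$; so $k\geq2$ holds without loss.

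\emph{Step 1: $Y$ has no isolated points.} The Toeplitz subshifts in that construction are non-periodic, so $Y$ is an infinite minimal subshift. In an infinite minimal system, suppose $y\in Y$ were isolated. Then $\{y\}$ would be a nonempty open set, and by minimality finitely many translates $\sigma^n\{y\}$ would cover $Y$. This would make $Y$ finite, a contradiction. If one prefers not to rely on the non-periodicity remark in \cite{DownarowiczSerafin2003}, a periodic minimal subshift would be a finite orbit. In that case $\Minv$ is a single point and all entropies vanish. Such a case can only arise when $\mathscr S$ is a point and $\mathfrak e\equiv0$, and then we substitute any infinite uniquely ergodic zero-entropy Toeplitz subshift (for example, a regular Toeplitz sequence such as the period-doubling one).

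\emph{Step 2: extreme points and entropy.} An affine homeomorphism between compact convex sets maps extreme points onto extreme points. The extreme points of $\Minv(\sigma|_Y)$ are exactly the ergodic measures, so $\phi(\operatorname{ex}\mathscr S)=\Me(\sigma|_Y)$. For topological entropy, the variational principle gives
\[
\htop(\sigma|_Y)=\sup_{\mu\in\Minv(\sigma|_Y)}h_\mu=\sup_{p\in\mathscr S}\mathfrak e(p).
\]
This supremum is attained because $\mathfrak e$ is upper semicontinuous on the compact set $\mathscr S$, so it is a maximum.

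The main obstacle is bookkeeping rather than a new argument. We must confirm that the cited theorem indeed produces a two-sided, non-periodic Toeplitz subshift. If the source produces one-sided Toeplitz subshifts, we pass to the natural two-sided extension. For minimal Toeplitz systems this extension is injective off a null set, or we can use directly that the invariant-measure simplex and measure entropies are preserved under natural extension. We then transport $\phi$ accordingly.
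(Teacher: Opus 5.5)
Your proposal is correct and follows essentially the paper's route: invoke the Downarowicz--Serafin theorem, use non-periodicity to get an infinite minimal (hence perfect) subshift, transfer extreme points through the affine homeomorphism $\phi$, and combine the variational principle with upper semicontinuity to obtain $\htop(\sigma|_Y)=\max_{p\in\mathscr S}\mathfrak e(p)$; your alphabet enlargement and periodic-case fallback are harmless extra safeguards. One side remark in your one-sided contingency is false and should be dropped: the natural-extension projection cannot be injective off a null set for a positive-entropy measure, because a one-sided shift is never invertible modulo such a measure. Rely instead only on your alternative: natural extension preserves minimality, the invariant-measure simplex and measure entropies, and the two-sided extension is still a symbolic almost one-to-one extension of the odometer, hence Toeplitz.
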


\begin{proof}
Downarowicz and Serafin \cite[Theorem~1]{DownarowiczSerafin2003} provide a
finite-alphabet Toeplitz subshift $(Y,\sigma)$ and the asserted affine
homeomorphism $\phi$ preserving the entropy function.  This subshift is a
minimal symbolic almost one-to-one extension of a fixed nonperiodic odometer
\cite[pp.~222--223]{DownarowiczSerafin2003}.  Hence $Y$ is infinite.  An
infinite minimal compact system has no isolated point, and a nonperiodic
subshift uses at least two symbols; relabel its alphabet as
$\{1,\ldots,k\}$.

An affine homeomorphism preserves extreme points, while the extreme points of
an invariant-measure simplex are precisely the ergodic measures.  Hence
$\phi(\operatorname{ex}\mathscr S)=\Me(\sigma|_Y)$.  Finally, upper
semicontinuity and compactness give a
maximum for $\mathfrak e$, and the variational principle together with the
surjectivity of $\phi$ gives
\[
 \htop(\sigma|_Y)
 =\sup_{\nu\in\Minv(\sigma|_Y)}h_\nu(\sigma|_Y)
 =\max_{p\in\mathscr S}\mathfrak e(p).\qedhere
\]
\end{proof}

\subsection{Realizing subshifts on the sphere}

For $r\geq2$, write $\Sigma_r=\{1,\ldots,r\}^{\mathbb Z}$ for the full
two-sided $r$-shift.  For $k\geq2$, we use the shorthand
\emph{$k$-leg horseshoe} for a locally maximal hyperbolic set of a surface
diffeomorphism whose stable and unstable bundles are one-dimensional and
whose restricted dynamics is conjugate to $(\Sigma_k,\sigma)$.

\begin{lemma}
\label[lemma]{lem:horseshoe-embedding}
Let $\varnothing\ne Y\subset\Sigma_k$ be a two-sided subshift, where $k\geq2$.
There exist an orientation-preserving $C^\infty$ diffeomorphism
$f:S^2\to S^2$, a compact $f$-invariant
topologically zero-dimensional set $X$,
and a homeomorphism $\chi:Y\to X$ satisfying
\[
       \chi\circ\sigma=f\circ\chi.
\]
Moreover, $X$ lies in a $k$-leg horseshoe $\Lambda$ of $f$, and $f$ may be
chosen so that $\operatorname{supp}(f)\cup\Lambda$ is contained in the
interior of one closed disk in $S^2$.  If $Y$ has no isolated points, then
$X$ is a Cantor set.
\end{lemma}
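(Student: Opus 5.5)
We build $f$ as a compactly supported smooth Smale horseshoe map in a disk and take $X=\chi(Y)$ inside its horseshoe. Fix a closed disk $D_0\subset S^2$ and a square $Q=[0,1]^2$ in $\operatorname{int}D_0$. The first step is to construct an orientation-preserving $C^\infty$ diffeomorphism $f$ with $\operatorname{supp}(f)\subset\operatorname{int}D_0$ whose restriction to $Q$ is the standard $k$-leg horseshoe. We choose $k$ disjoint vertical strips $V_1,\dots,V_k\subset Q$ that $f$ maps affinely, with uniform contraction $\lambda<1/k$ horizontally and expansion $\lambda^{-1}$ vertically, onto $k$ disjoint horizontal strips $H_1,\dots,H_k\subset Q$. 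To keep $f$ orientation preserving, we compose each strip map with a rotation by $\pi$ where needed; orientation of the snake folding is not an issue, since rotations by $\pi$ preserve orientation.

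The next step is to extend this partial map to a diffeomorphism of $S^2$ that is the identity outside a slightly smaller disk $D_1\subset\operatorname{int}D_0$. We first extend it to a smooth embedding of a neighbourhood of $Q$, namely a thickened horseshoe (a "snake" of $k$ legs) inside $\operatorname{int}D_1$. Since $\operatorname{Diff}^+(D^2,\partial)$ is connected (Smale), any orientation-preserving embedding of a disk into $\operatorname{int}D_1$ is isotopic to the inclusion. By the isotopy extension theorem, the embedding then extends to a diffeomorphism of $S^2$ supported in $\operatorname{int}D_1$. Hence $\operatorname{supp}(f)\subset\operatorname{int}D_0$.

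The third step identifies the horseshoe. The maximal invariant set $\Lambda=\bigcap_{n\in\mathbb Z}f^n(Q)$ is hyperbolic with one-dimensional bundles, because $f$ is affine on the legs with the cone estimates above. It is locally maximal with isolating neighbourhood $\operatorname{int}$ of a small neighbourhood of $Q$; one should check that points leaving $Q$ near its boundary do not return, which we arrange by choosing the extension so that $f(Q)\cap Q=\bigcup H_i$ exactly. The coding map $\pi:\Lambda\to\Sigma_k$, $\pi(x)_n=i$ iff $f^n(x)\in V_i$, is a homeomorphism conjugating $f$ to $\sigma$. This is standard, since nested intersections of vertical and horizontal strips shrink to points with widths $\lambda^n$. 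Thus $\Lambda\subset Q\subset\operatorname{int}D_0$ is a $k$-leg horseshoe.

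Finally, set $\chi=\pi^{-1}|_Y$ and $X=\chi(Y)$. Then $X$ is compact and $f$-invariant because $\sigma(Y)=Y$, and $\chi\circ\sigma=f\circ\chi$. $X$ is zero-dimensional because it is homeomorphic to a subset of $\Sigma_k$. If $Y$ has no isolated points, $X$ is a compact, perfect, totally disconnected, metrizable set, hence a Cantor set. The main obstacle is the extension step: making the affine strip maps part of a global orientation-preserving diffeomorphism supported in a disk, without creating extra returns to $Q$ that would spoil the identification of the maximal invariant set. I would handle it with the explicit stadium-shaped horseshoe construction, whose $C^\infty$ smoothing is classical, and then the isotopy extension argument above.
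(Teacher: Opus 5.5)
Your proposal is correct in outline but follows a different route from the paper. The paper never builds a $k$-leg map. It takes the standard two-strip affine horseshoe and cites Buzzi's appendix for a $C^\infty$ realization equal to the identity near $\partial D$. It proves hyperbolicity by cones and local maximality from $\Lambda\subset\operatorname{int}R$. It then passes to the iterate $f=G^m$ with $2^m\geq k$, recodes by $m$-blocks, selects $k$ block symbols, and proves that the selected subhorseshoe is locally maximal via the open set $W=U_m\setminus(\Lambda\setminus C)$.

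You instead build a $k$-fold snake directly, so $f$ itself, not an iterate, is conjugate to the full $k$-shift on its horseshoe. You also obtain the compactly supported extension from the disk theorem plus isotopy extension rather than from a citation. The paper's route needs only the citable two-leg model, at the cost of the block coding and the clopen isolation argument. Your route avoids that symbolic bookkeeping but requires designing the $k$-fold embedding with all turning regions outside $Q$. It also has a bonus: $f$ arrives as the time-one map of an ambient isotopy compactly supported in $\operatorname{int}D_1$. That is exactly the relative-boundary isotopy the paper later extracts from Smale's Theorem~B in the proof of \cref{thm:main}.

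A few points need tightening; none is fatal.
\begin{enumerate}
\item As written, a vertical strip contracted horizontally and expanded vertically cannot land on a horizontal strip of $Q$. Make the domain strips horizontal, or swap the contraction and expansion directions, and code by the domain strips. Also take $\lambda$ small enough that the stretched strip leaves room for the turning regions, not merely $\lambda<1/k$.
\item The fact that an orientation-preserving embedding of a disk into $\operatorname{int}D_1$ is isotopic to the inclusion is the Palais--Cerf disk theorem. It does not follow from connectedness of $\operatorname{Diff}^+(D^2,\partial)$, which concerns diffeomorphisms already defined on all of $D_1$ and so cannot produce the extension you are building.
\item You do not need an isolating neighborhood larger than $Q$. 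Placing the strips away from $\partial Q$ in the transverse directions gives $\Lambda\subset\operatorname{int}Q$, hence $\bigcap_n f^n(\operatorname{int}Q)=\Lambda$, as in the paper. The condition $f(Q)\cap Q=\bigcup_i H_i$ is what the coding needs, but it says nothing about orbits in a neighborhood strictly larger than $Q$.
\end{enumerate}
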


\begin{proof}
We first construct and code a disk-supported two-leg horseshoe.  We then pass
to an iterate with at least $k$ block symbols and verify the local maximality
of the selected $k$-symbol subhorseshoe.

Let $D\subset S^2$ be a closed disk.  We use the standard
orientation-preserving affine two-strip horseshoe model described in
\cite[\S2.5c]{KatokHasselblatt1995}.  As noted in
\cite[Appendix~A.1, especially p.~1786]{Buzzi2014}, this model admits a
$C^\infty$ realization on $D$ that equals the identity near $\partial D$.
Accordingly, choose such a diffeomorphism $G:D\to D$ and a closed rectangle
$R\subset\operatorname{int}D$ in the two-strip horseshoe configuration.
Since $G=\id$ near $\partial D$, it extends by the
identity on $S^2\setminus D$ to a $C^\infty$ diffeomorphism of $S^2$, still
denoted by $G$.  Write $R=I^u\times J^s$ in product coordinates.

There are
pairwise disjoint compact subintervals $I_1^u,I_2^u$ of
$\operatorname{int}I^u$ and pairwise disjoint compact subintervals
$J_1^s,J_2^s$ of $\operatorname{int}J^s$ such that, for
$V_i=I_i^u\times J^s$,
\[
 \begin{aligned}
 R\cap G^{-1}(R)&=V_1\cup V_2,\\
 G(V_i)&=I^u\times J_i^s \qquad (i=1,2).
 \end{aligned}
\]
Thus the $V_i$ are vertical strips crossing $R$, while their images are
horizontal strips crossing $R$.

Choose the model so that, for $i=1,2$,
\[
 G|_{V_i}\colon I_i^u\times J^s \longrightarrow I^u\times J_i^s
\]
is an affine diffeomorphism with a common expansion factor $a>1$ in the
$I^u$-coordinate and a common contraction factor $0<\tau<1$ in the
$J^s$-coordinate.  Since $G$ is orientation preserving, there are signs
$\varepsilon_i,\delta_i\in\{-1,1\}$ with $\varepsilon_i\delta_i=1$ such that
\[
 DG_z(\xi,\eta)=(\varepsilon_i a\xi,\delta_i\tau\eta)
 \qquad (z\in V_i).
\]

Choose a Riemannian metric on $S^2$ that agrees with the Euclidean product
metric on a neighborhood of $R$.  This merely simplifies the cone estimates:
all Riemannian metrics on the compact manifold $S^2$ are uniformly equivalent,
so uniform hyperbolicity is unaffected by this choice, although the constants
in the estimates may change.  Choose $\alpha>0$ so small that
\[
 \frac{\min\{a,\tau^{-1}\}}{\sqrt{1+\alpha^2}}>1.
\]
Define constant horizontal and vertical cone fields on the
product-coordinate neighborhood of $R$ by
\[
 \begin{aligned}
 C_z^u&=\{(\xi,\eta):|\eta|\leq\alpha|\xi|\},\\
 C_z^s&=\{(\xi,\eta):|\xi|\leq\alpha|\eta|\}.
 \end{aligned}
\]

If $z\in V_i$ and $0\ne v=(\xi,\eta)\in C_z^u$, then
\[
 \frac{|(DG_zv)_2|}{|(DG_zv)_1|}
 \leq\frac{\tau}{a}\alpha<\alpha,
 \qquad
 \lVert DG_zv\rVert
 \geq\frac{a}{\sqrt{1+\alpha^2}}\lVert v\rVert.
\]
Likewise, if $z\in V_i$, $y=G(z)$, and
$0\ne w=(\xi,\eta)\in C_y^s$, then
\[
 D(G^{-1})_y w
   =(\varepsilon_i a^{-1}\xi,\delta_i\tau^{-1}\eta),
\]
and therefore
\[
 \frac{|(D(G^{-1})_yw)_1|}{|(D(G^{-1})_yw)_2|}
 \leq\frac{\tau}{a}\alpha<\alpha,
 \qquad
 \lVert D(G^{-1})_yw\rVert
 \geq\frac{\tau^{-1}}{\sqrt{1+\alpha^2}}\lVert w\rVert.
\]
Fix
\[
 1<\lambda<\frac{\min\{a,\tau^{-1}\}}{\sqrt{1+\alpha^2}}.
\]

\noindent The preceding estimates give
\[
\begin{aligned}
 DG_x(C_x^u\setminus\{0\})&\subset\operatorname{int}C_{G(x)}^u,
 &\lVert DG_xv\rVert&\geq\lambda\lVert v\rVert,\\
 D(G^{-1})_y(C_y^s\setminus\{0\})&\subset
       \operatorname{int}C_{G^{-1}(y)}^s,
 &\lVert D(G^{-1})_yw\rVert&\geq\lambda\lVert w\rVert.
\end{aligned}
\]
The first line holds for $x\in V_1\cup V_2$ and
$0\ne v\in C_x^u$, and the second for $y\in G(V_1)\cup G(V_2)$ and
$0\ne w\in C_y^s$.

Put
\[
 \Lambda=\bigcap_{n\in\mathbb Z}G^n(R).
\]
The set $\Lambda$ is compact and $G$-invariant.  If $x\in\Lambda$, then
$x,G(x)\in R$, and hence
\[
       x\in R\cap G^{-1}(R)=V_1\cup V_2.
\]
Applying this observation to $G^{-1}(x)\in\Lambda$ gives
$x\in G(V_1)\cup G(V_2)$, so both cone estimates hold throughout $\Lambda$.

The cone criterion
\cite[Corollary~6.4.8]{KatokHasselblatt1995} therefore shows that
$\Lambda$ is hyperbolic and carries a continuous invariant splitting
\[
       T_\Lambda S^2=E^s\oplus E^u,
       \qquad \dim E^s=\dim E^u=1.
\]

Moreover, every $x\in\Lambda$ belongs both to $V_1\cup V_2$ and to
$G(V_1)\cup G(V_2)$.  Consequently,
\[
 \Lambda\subset
 (I_1^u\cup I_2^u)\times(J_1^s\cup J_2^s),
\]
and the set on the right is a compact subset of $\operatorname{int}R$.
Invariance of $\Lambda$ now gives
\[
 \Lambda=\bigcap_{n\in\mathbb Z}G^n(\operatorname{int}R),
\]
so it is locally maximal.

We next make the symbolic coding explicit.  For $\omega\in\Sigma_2$ and
$N\geq0$, let
\[
 K_N(\omega)=
 \{x\in R:G^n(x)\in V_{\omega_n}\text{ for }-N\leq n\leq N\}.
\]
For $i,j\in\{1,2\}$, the identities
$V_i=I_i^u\times J^s$ and $G(V_i)=I^u\times J_i^s$ give
\[
 G(V_i)\cap V_j=I_j^u\times J_i^s\neq\varnothing.
\]
Thus every horizontal strip $G(V_i)$ crosses every vertical strip $V_j$.
An induction on the length of the prescribed finite itinerary shows that
each $K_N(\omega)$ is a nonempty compact rectangle.

The definition also gives
$K_{N+1}(\omega)\subset K_N(\omega)$.  If $x,x'\in K_N(\omega)$ and
$x=(x^u,x^s)$, $x'=(x^{\prime u},x^{\prime s})$ in the product coordinates,
then the affine formulas on the common itinerary give
\[
 \begin{aligned}
 |x^u-x^{\prime u}|&\leq \operatorname{diam}(I^u)a^{-N},\\
 |x^s-x^{\prime s}|&\leq \operatorname{diam}(J^s)\tau^N.
 \end{aligned}
\]
Thus $\operatorname{diam}K_N(\omega)\to0$, uniformly in $\omega$.  The nested
sets consequently have a unique common point, denoted by $c(\omega)$.
Fix $n\in\mathbb Z$ and choose $N\geq|n|$.  Since
$c(\omega)\in K_N(\omega)$ and $n\in[-N,N]$, the definition of
$K_N(\omega)$ gives $G^n(c(\omega))\in V_{\omega_n}\subset R$.
Thus the full orbit of $c(\omega)$ remains in $R$, and consequently
\[
 c(\omega)\in\bigcap_{n\in\mathbb Z}G^{-n}(R)=\Lambda.
\]

Conversely, every $x\in\Lambda$ has a unique itinerary because $V_1$ and $V_2$
are disjoint.  Hence
$c:\Sigma_2\to\Lambda$ is bijective.  The uniform diameter estimate gives
continuity of $c$, and compactness of $\Sigma_2$ then makes $c$ a
homeomorphism.
For every $n\in\mathbb Z$,
\[
 G^n(G(c(\omega)))=G^{n+1}(c(\omega))
 \in V_{\omega_{n+1}}=V_{(\sigma\omega)_n}.
\]
Thus $G(c(\omega))$ has itinerary $\sigma\omega$.  By uniqueness of the point
with a prescribed itinerary,
\[
       c\circ\sigma=G\circ c.
\]

To obtain at least $k$ symbolic legs from the two-leg model, choose $m\geq1$
with $2^m\geq k$.  Set
\[
       \mathcal B=\{1,2\}^m,\qquad f=G^m,
\]
and define the block map by
\[
 B:\Sigma_2\longrightarrow\mathcal B^{\mathbb Z},
 \qquad
 B(\omega)_j
 =(\omega_{mj},\omega_{mj+1},\ldots,\omega_{mj+m-1}).
\]
If $\beta=(\beta_j)_{j\in\mathbb Z}\in\mathcal B^{\mathbb Z}$ and
$\beta_j=(\beta_{j,0},\ldots,\beta_{j,m-1})$, then
\[
       (B^{-1}\beta)_{mj+r}=\beta_{j,r}
       \qquad(j\in\mathbb Z,\ 0\leq r<m).
\]
Thus $B$ is a homeomorphism and $B\circ\sigma^m=\sigma\circ B$.

Hence
\[
       \widehat c=c\circ B^{-1}:\mathcal B^{\mathbb Z}\longrightarrow\Lambda
\]
conjugates the full shift on the alphabet $\mathcal B$ to $f|_\Lambda$; in
particular,
\[
       \widehat c\circ\sigma=f\circ\widehat c.
\]

Select $\mathcal A\subset\mathcal B$ with $|\mathcal A|=k$ and set
\[
       \Lambda_k=\widehat c(\mathcal A^{\mathbb Z}).
\]
The set $\Lambda_k$ is hyperbolic for $f$: the splitting over $\Lambda$ for
$G$ is also hyperbolic for $G^m$, and its restriction to the compact invariant
subset $\Lambda_k$ remains hyperbolic.  Its stable and unstable bundles are
the restrictions of $E^s$ and $E^u$, so both are one-dimensional.

To verify local maximality, let
$U=\operatorname{int}R$ and
\[
      U_m=\bigcap_{r=0}^{m-1}G^{-r}(U).
\]
Then $U_m$ is an open neighborhood of $\Lambda$.
Since $f$ is invertible, the intersection below is precisely the set of points
whose full $f$-orbit remains in $U_m$.  We have
\begin{equation}
\label{eq:power-isolation}
       \bigcap_{j\in\mathbb Z}f^j(U_m)=\Lambda.
\end{equation}
Indeed, if the full $f$-orbit of $x$ remains in $U_m$, then
$G^{mj+r}(x)\in U$ for every $j\in\mathbb Z$ and $0\leq r<m$.  These
integers $mj+r$ exhaust $\mathbb Z$, so the full $G$-orbit of $x$ remains in
$U$ and $x\in\Lambda$.  Conversely, if $x\in\Lambda$, then
$G^n(x)\in U$ for every $n\in\mathbb Z$.  Hence, for every $j\in\mathbb Z$
and $0\leq r<m$,
\[
       G^r(f^j(x))=G^{mj+r}(x)\in U.
\]
Thus $f^j(x)\in U_m$ for every $j\in\mathbb Z$, proving the reverse
inclusion.

The set
\[
 C=\widehat c\bigl(
      \{\omega\in\mathcal B^{\mathbb Z}:\omega_0\in\mathcal A\}
    \bigr)
\]
is clopen in $\Lambda$.  Since $\Lambda\setminus C$ is closed in $S^2$, the set
\[
       W=U_m\setminus(\Lambda\setminus C)
\]
is open.  Moreover, $C\subset W\subset U_m$, $W\cap\Lambda=C$, and
$\Lambda_k\subset C$.  Since $W\subset U_m$, every full $f$-orbit contained
in $W$ lies in $\Lambda$ by \eqref{eq:power-isolation}.

Because $W\cap\Lambda=C$, every iterate of such an orbit lies in $C$.
Moreover, if
$x=\widehat c(\omega)\in\Lambda$, then the conjugacy gives
$f^j(x)=\widehat c(\sigma^j\omega)$, and hence
$f^j(x)\in C$ if and only if $(\sigma^j\omega)_0=\omega_j\in\mathcal A$.
Consequently,
\[
 \bigcap_{j\in\mathbb Z}f^j(W)
 =\{x\in\Lambda:f^j(x)\in C\text{ for every }j\in\mathbb Z\}
 =\widehat c(\mathcal A^{\mathbb Z})=\Lambda_k.
\]
Thus $\Lambda_k$ is locally maximal.

After choosing a bijection
$\iota:\{1,\ldots,k\}\to\mathcal A$, its coordinatewise extension gives a
conjugacy
\[
       \psi=\widehat c\circ\iota^{\mathbb Z}:\Sigma_k\longrightarrow\Lambda_k.
\]
Together with the one-dimensional stable and unstable bundles identified
above, this proves that $\Lambda_k$ is a $k$-leg horseshoe for $f$.
Thus $\Lambda_k$ is the horseshoe required in the statement.

Put $X=\psi(Y)$ and $\chi=\psi|_Y$.  Then $X$ is
compact, $f(X)=X$, and $\chi\circ\sigma=f\circ\chi$.  Intersections of
cylinder sets with $Y$ form a clopen basis for $Y$, so $Y$ and $X$ are
topologically zero-dimensional.  If $Y$ has no isolated points, then $X$ is a
nonempty compact metrizable topologically zero-dimensional perfect space, and
therefore is a Cantor set.
The conjugacy also gives
\[
       \htop(f|_X)=\htop(\sigma|_Y).
\]
Finally, $f=G^m$ is orientation preserving.  Since $G$ is the identity near
$\partial D$ and on $S^2\setminus D$, and since
$\Lambda_k\subset\Lambda\subset\operatorname{int}D$, we also have
\[
 \operatorname{supp}(f)\cup\Lambda_k\subset\operatorname{int}D.
\]

\end{proof}

\subsection{Smooth functions with prescribed zero sets}

\begin{lemma}
\label[lemma]{lem:zero-set}
\cite[Theorem~2.29]{Lee2013}
If $A$ is a closed subset of a closed smooth manifold $Q$, then there exists
$\rho\in C^\infty(Q,[0,\infty))$ such that $\rho^{-1}(0)=A$.
\end{lemma}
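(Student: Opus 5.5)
The plan is the standard Whitney-type argument: write the open complement $Q\setminus A$ as a countable union of coordinate balls and sum rapidly decaying bump functions. If $A=Q$, take $\rho\equiv0$. Otherwise $U=Q\setminus A$ is a nonempty open subset of a second-countable smooth manifold, so it is covered by countably many open sets $B_i\subset U$, $i\geq1$, each diffeomorphic through a chart to an open Euclidean ball. For each $i$ I choose a smooth bump $\beta_i\colon Q\to[0,1]$ with $\beta_i^{-1}\bigl((0,1]\bigr)=B_i$. Concretely, in the chart I take $\beta_i(x)=\exp\bigl(-1/(r_i^2-|x-x_i|^2)\bigr)$ inside the ball and $\beta_i=0$ outside. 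This is $C^\infty$ on $Q$ because the standard function $e^{-1/t}$ for $t>0$ (and $0$ for $t\leq0$) is smooth with all derivatives vanishing at $t=0$. For this to hold globally, the closure of each chart ball must be a compact subset of its chart domain; I arrange this by choosing the $B_i$ as preimages of balls compactly contained in chart images.

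Next I fix a finite atlas of $Q$, which exists by compactness, and a compact-set seminorm structure. For each $i$, let $M_i=\max\{1,\|\beta_i\|_{C^i}\}$, where $\|\cdot\|_{C^i}$ is the sup of all partial derivatives of order at most $i$ in the finitely many charts over fixed compact shrinkings that still cover $Q$. Set
\[
 \rho=\sum_{i\geq1}\frac{2^{-i}}{M_i}\,\beta_i .
\]

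To check smoothness, fix an order $k$. For $i\geq k$ the $C^k$ norm of the $i$-th term is at most $2^{-i}$, so the series of $k$-th derivatives converges uniformly on each chart compact. Hence $\rho$ is $C^k$ for every $k$, and therefore $C^\infty$, with derivatives computed termwise. Nonnegativity is immediate.

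For the zero set: if $x\in A$, then $x\notin B_i$ for every $i$, so each $\beta_i(x)=0$ and $\rho(x)=0$. If $x\notin A$, then $x\in B_i$ for some $i$, so $\beta_i(x)>0$; since all other terms are nonnegative, $\rho(x)>0$. Thus $\rho^{-1}(0)=A$.

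The only step needing care is the choice of the normalizing constants $M_i$, so that the derivative series converges uniformly. This is handled by making the $i$-th constant control derivatives up to order $i$, as above. The rest is routine, so I would cite \cite[Theorem~2.29]{Lee2013} for these details.
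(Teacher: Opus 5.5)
Your proposal is correct, and it is essentially the argument behind the cited \cite[Theorem~2.29]{Lee2013}, which the paper invokes without proof: cover $Q\setminus A$ by countably many coordinate balls, place an exponential bump on each, and normalize so that the $i$-th term has $C^i$ size at most $2^{-i}$. The only difference is that Lee runs the construction in $\mathbb R^n$ and then transfers it to manifolds with a partition of unity, whereas you work directly on $Q$ with a finite atlas, which compactness of $Q$ permits. For termwise differentiation, take your compact shrinkings to be closures of an open cover, so that their interiors cover $Q$.
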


\subsection{Nonnegative coboundaries}

\begin{lemma}
\label[lemma]{lem:coboundary}
Let $T:Z\to Z$ be a continuous map of a compact metric space, let
$L:Z\to\mathbb R$ be continuous, and suppose that $L\circ T-L\geq0$.
Then every $T$-invariant Borel probability $\mu$ satisfies
\[
 \operatorname{supp}\mu
 \subseteq\{z\in Z:L(Tz)=L(z)\}.
\]
\end{lemma}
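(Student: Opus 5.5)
The idea is to integrate the nonnegative coboundary against $\mu$ and then pass from an almost-everywhere statement to the support by continuity. Put $g=L\circ T-L$. Since $Z$ is compact and $L$ is continuous, $g$ is bounded and continuous, and by hypothesis $g\geq0$. Let $\mu$ be $T$-invariant. Invariance gives $\int L\circ T\dd\mu=\int L\dd\mu$, and both integrals are finite because $L$ is bounded. Subtracting, we get
\[
 \int_Z g\dd\mu=0 .
\]
A nonnegative function with zero integral vanishes $\mu$-almost everywhere. Hence the open set $U=\{z\in Z:g(z)>0\}$ satisfies $\mu(U)=0$.

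Next we identify the support. By definition, $\operatorname{supp}\mu$ is the complement of the largest open $\mu$-null set; in a separable metric space such a set exists, namely the union of all $\mu$-null open sets, which is a countable union by Lindelöf. Since $U$ is open and $\mu$-null, it lies in that union, so $\operatorname{supp}\mu\subseteq Z\setminus U$. Because $g\geq0$, we have $Z\setminus U=\{z:g(z)=0\}=\{z:L(Tz)=L(z)\}$, and this is exactly the claimed inclusion.

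There is no real obstacle here. The only points that need care are that $L$ must be integrable for the subtraction to make sense, which compactness guarantees, and that we need openness of $\{g>0\}$, which continuity of $g$ supplies. Openness is what upgrades the almost-everywhere conclusion to a statement about the support.
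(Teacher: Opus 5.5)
Your proof is correct and follows essentially the same route as the paper: integrate the nonnegative continuous coboundary $L\circ T-L$ against $\mu$ to get zero, then use that $\{L\circ T-L>0\}$ is an open $\mu$-null set to place $\operatorname{supp}\mu$ inside the zero set. The paper phrases the last step equivalently, saying the closed zero set has full measure and therefore contains the support.
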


\begin{proof}
For $\mu\in\Minv(T)$, invariance gives
\[
 \int_Z(L\circ T-L)\dd\mu=0.
\]
The integrand is continuous and nonnegative, so its closed zero set has full
$\mu$-measure and therefore contains $\operatorname{supp}\mu$.
\end{proof}

\section{The localized solid-torus construction}
\label{sec:localization}

The proof of \cref{thm:main} uses a two-stage local construction.  First, we
thicken a relative-boundary disk isotopy in auxiliary disk directions while
preserving its dynamics on a central slice.  Second, we couple the thickened
isotopy to a drift in a circle coordinate.  The resulting circle-coordinate
Lyapunov function has an equality set that is the disjoint union of the
prescribed invariant set and a set of fixed points.  This structure yields
both the isolation of the prescribed set and the invariant-measure
classification.  Since the entire construction is the identity on a boundary
collar, the resulting solid-torus diffeomorphism extends smoothly by the
identity to the ambient manifold.

Let $P$ be a compact smooth manifold with boundary.  A smooth isotopy
$(I_s)_{0\leq s\leq1}\subset\operatorname{Diff}^\infty(P)$ is
\emph{relative to the boundary} if there is a neighborhood $C$ of
$\partial P$ such that
\[
 I_s|_C=\id_C\qquad(0\leq s\leq1).
\]

The following elementary thickening keeps the original dynamics on a central
slice while making the map trivial near all newly introduced boundary
directions.

\begin{lemma}
\label[lemma]{lem:relative-boundary-thickening}
Let $m\geq1$ and $r\geq0$ be integers.  Let $D$ be a closed smooth $m$-disk
and let $(I_s)_{0\leq s\leq1}$ be a smooth isotopy of $D$ relative to the
boundary such that $I_0=\id_D$.  There are a closed smooth $(m+r)$-disk $D^\sharp$, a
smooth embedding $\kappa:D\to D^\sharp$ with
$\kappa(\operatorname{int}D)\subset\operatorname{int}D^\sharp$, and a
smooth relative-boundary isotopy
$(\overline I_s)_{0\leq s\leq1}$ of $D^\sharp$ such that
\[
 \overline I_0=\id_{D^\sharp},
 \qquad
 \overline I_s\circ\kappa=\kappa\circ I_s
 \quad(0\leq s\leq1).
\]
Moreover, suppose that $(I_s^j)_{0\leq s\leq1}$ is a sequence of such
isotopies, all equal to the identity on one fixed neighborhood of
$\partial D$, and that the evaluation maps
\[
 (s,x)\longmapsto I_s^j(x)
\]
converge in $C^\infty([0,1]\times D,D)$ to $(s,x)\mapsto x$.  Then the same
$D^\sharp$ and $\kappa$ may be used for every $j$, and the corresponding
evaluation maps $(s,y)\mapsto\overline I_s^j(y)$ converge in
$C^\infty([0,1]\times D^\sharp,D^\sharp)$ to $(s,y)\mapsto y$.
\end{lemma}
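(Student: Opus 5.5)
The plan is a cutoff-in-time construction on a product. Take $D^\sharp$ to be (after smoothing corners) a closed $(m+r)$-disk containing the product $D\times D^r$, where $D^r$ is the closed unit $r$-disk; put $\kappa(x)=(x,0)$. Since corners must be avoided, it is cleanest to realise $D^\sharp$ as a smooth disk with $D\times\tfrac12 D^r\subset D^\sharp\subset D\times D^r$, chosen so that $D^\sharp$ agrees with the product near $D\times\{0\}$ and so that its boundary lies in the region $C\times D^r\cup D\times\{|v|>1/2\}$. Here $C$ is the fixed collar of $\partial D$ on which every $I_s$ is the identity. For $r=0$ simply take $D^\sharp=D$ and $\kappa=\id$.

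First I fix a smooth cutoff $\beta:D^r\to[0,1]$ with $\beta\equiv1$ near $0$ and $\beta\equiv0$ on $\{|v|\ge 1/3\}$. Then I define
\[
 \overline I_s(x,v)=\bigl(I_{s\beta(v)}(x),v\bigr).
\]
This map preserves every slice $D\times\{v\}$, and on that slice it equals the diffeomorphism $I_{s\beta(v)}$. Its inverse is therefore $(x,v)\mapsto(I_{s\beta(v)}^{-1}(x),v)$. This inverse is smooth because $(t,x)\mapsto I_t^{-1}(x)$ is smooth, by the implicit function theorem applied to the smooth family of diffeomorphisms. It follows that $\overline I_s$ is a diffeomorphism and that $(s,y)\mapsto\overline I_s(y)$ is smooth.

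Next I check the required properties. At $s=0$ we get the identity. On $\kappa(D)=D\times\{0\}$ we have $\beta=1$, so $\overline I_s\circ\kappa=\kappa\circ I_s$. Near $\partial D^\sharp$, either $x\in C$, where every $I_t$ is the identity, or $|v|>1/3$, where $\beta=0$. In both cases $\overline I_s$ is the identity on a fixed neighbourhood of $\partial D^\sharp$. This neighbourhood is independent of $s$, which gives the relative-boundary condition. It also follows that $\overline I_s$ preserves $D^\sharp$: it fixes the region near the boundary and preserves slices, so it maps $D^\sharp$ into itself, and it is onto because the same holds for its inverse.

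For the sequence statement, I use the same $D^\sharp$, $\kappa$ and $\beta$ for every $j$. This is possible because the collar $C$ is common to all the $I^j$. The evaluation map $(s,x,v)\mapsto\overline I^j_s(x,v)$ is the composition of the fixed smooth map $(s,x,v)\mapsto(s\beta(v),x,v)$ with $(t,x,v)\mapsto(I^j_t(x),v)$. The chain rule, together with Faà di Bruno's bounds on compact sets, then shows that $C^k$-convergence of $(t,x)\mapsto I^j_t(x)$ to the projection gives $C^k$-convergence of the composite to $(s,y)\mapsto y$.

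The main obstacle is bookkeeping rather than analysis: I must arrange that $D^\sharp$ is an honest smooth disk, with corners rounded inside the region where the maps are already the identity, so that smoothness of the boundary is never affected by the dynamics.
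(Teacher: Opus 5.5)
Your proposal is correct and is essentially the paper's own proof: the same slice-wise cutoff $(x,z)\mapsto\bigl(I_{s\gamma(z)}(x),z\bigr)$ on $D\times B^r$, a smooth inverse coming from the smoothness of $(t,x)\mapsto I_t^{-1}(x)$, and rounding of the corner $\partial D\times\partial B^r$ only inside a region where every map is already the identity, so that $D^\sharp$ is invariant and can be fixed once for all $j$; it also uses the same $\kappa(x)=(x,0)$ and obtains $C^\infty$ convergence by precomposing with the fixed smooth map $(s,x,z)\mapsto(s\gamma(z),x)$. The only difference is cosmetic: you confine the rounding to $\{|v|>1/2\}$, where your cutoff already vanishes, whereas the paper chooses a neighborhood $W$ of the corner inside the common fixed region and away from the joint support and the central slice.
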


The first assertion will be used below for \cref{thm:main}; the uniform
sequential assertion is reserved for the near-identity construction in
\cref{sec:near-identity}.

\begin{proof}
For $r=0$, take $D^\sharp=D$, $\kappa=\id_D$, and
$\overline I_s=I_s$.  Suppose henceforth that $r\geq1$.  By the
relative-boundary hypothesis, choose an open neighborhood
$U_\partial\subset D$ of $\partial D$ such that
\[
 I_t|_{U_\partial}=\id_{U_\partial}
 \qquad (0\leq t\leq1).
\]
Let $B^r$ be the closed unit ball, and choose
$\gamma\in C^\infty(B^r,[0,1])$ such that $\gamma\equiv1$ near $0$ and
$\gamma\equiv0$ near $\partial B^r$.  On the product $D\times B^r$, whose
boundary faces $\partial D\times B^r$ and $D\times\partial B^r$ meet along
the codimension-two corner $\partial D\times\partial B^r$, define
\[
 J_s(x,z)=\bigl(I_{s\gamma(z)}(x),z\bigr).
\]

Define
\[
 \mathcal I:D\times[0,1]\longrightarrow D\times[0,1],
 \qquad
 \mathcal I(x,t)=\bigl(I_t(x),t\bigr).
\]
Since every $I_t$ is bijective, so is $\mathcal I$.  It remains to prove that
its inverse is smooth.  Fix $(x_0,t_0)\in D\times[0,1]$.  If
$x_0\in U_\partial$, then $\mathcal I$ and its inverse are the identity on
$U_\partial\times[0,1]$, so the inverse is smooth there.  Otherwise
$x_0\in D\setminus U_\partial\subset\operatorname{int}D$.
Since $(I_t)_{0\leq t\leq1}$ is a smooth isotopy, its evaluation map
\[
 [0,1]\times D\longrightarrow D,
 \qquad
 (t,x)\longmapsto I_t(x),
\]
is smooth up to the endpoints $t=0,1$.  Hence the coordinate expression of
$\mathcal I$ extends near $(x_0,t_0)$ to a smooth map between open subsets of
$\mathbb R^{m+1}$.  Its derivative at $(x_0,t_0)$ is
\[
 D\mathcal I_{(x_0,t_0)}=
 \begin{pmatrix}
  D_{x_0}I_{t_0} & \left.\partial_t I_t(x_0)\right|_{t=t_0}\\
  0               & 1
 \end{pmatrix},
\]
which is invertible because $D_{x_0}I_{t_0}$ is invertible.  The Euclidean
inverse function theorem \cite[Theorem~C.34]{Lee2013} therefore gives a smooth
local inverse.  Since every $I_t$ is bijective, this local inverse agrees on
$D\times[0,1]$ with $(y,t)\mapsto\bigl(I_t^{-1}(y),t\bigr)$.  As
$(x_0,t_0)$ was arbitrary, $\mathcal I$ has the smooth global inverse
\[
 \mathcal I^{-1}(y,t)=\bigl(I_t^{-1}(y),t\bigr).
\]

Consequently
\[
 J_s^{-1}(y,z)=\bigl(I_{s\gamma(z)}^{-1}(y),z\bigr),
\]
and both $J_s$ and $J_s^{-1}$ depend smoothly on their variables and on $s$.
Moreover, $I_0=\id_D$ gives
\[
 J_0(x,z)=\bigl(I_0(x),z\bigr)=(x,z),
\]
so $(J_s)_{0\leq s\leq1}$ is a smooth isotopy starting at the identity.

It is the identity on a
neighborhood of the entire boundary of $D\times B^r$.  Indeed, choose an open
neighborhood $V_\partial\subset B^r$ of $\partial B^r$ on which $\gamma=0$.
If $(x,z)\in U_\partial\times B^r$, then
\[
 J_s(x,z)=\bigl(I_{s\gamma(z)}(x),z\bigr)=(x,z)
\]
by the choice of $U_\partial$.  If $(x,z)\in D\times V_\partial$, then
\[
 J_s(x,z)=\bigl(I_0(x),z\bigr)=(x,z).
\]
Thus every $J_s$ is the identity on
\[
 (U_\partial\times B^r)\cup(D\times V_\partial),
\]
which is a neighborhood of
\[
 \partial(D\times B^r)
   =(\partial D\times B^r)\cup(D\times\partial B^r).
\]

Put
\[
 C_I=\overline{\{x\in D:I_t(x)\ne x
                     \text{ for some }t\in[0,1]\}},
\]
and, for brevity, call the set
\[
 C_J=\overline{\bigl\{(x,z)\in D\times B^r:
          J_s(x,z)\ne(x,z)\ \text{for some }s\in[0,1]\bigr\}}
\]
the joint support of $(J_s)_{0\leq s\leq1}$.
If $J_s(x,z)\ne(x,z)$ for some $s$, then
$I_{s\gamma(z)}(x)\ne x$ and, since $I_0=\id_D$, necessarily
$\gamma(z)\ne0$.  Thus $x\in C_I$ and
$z\in\operatorname{supp}\gamma$, and hence
\[
 C_J\subset C_I\times\operatorname{supp}\gamma.
\]
The set on the right is a compact subset of
$\operatorname{int}(D\times B^r)$: indeed,
$C_I\subset D\setminus U_\partial\subset\operatorname{int}D$, while
$\operatorname{supp}\gamma\subset B^r\setminus V_\partial
\subset\operatorname{int}B^r$.

The corner
$\Sigma=\partial D\times\partial B^r$ is disjoint from both
$C_I\times\operatorname{supp}\gamma$ and the central
slice $D\times\{0\}$.  Every $J_s$ is the identity on the open neighborhood
\[
 U_J=(U_\partial\times B^r)\cup(D\times V_\partial)
\]
of $\Sigma$.  Choose an open neighborhood $W$ of $\Sigma$ whose closure is
contained in $U_J$ and is disjoint from both sets.

Choose inward collar embeddings
\[
 c_D:\partial D\times[0,\varepsilon)\longrightarrow D,
 \qquad
 c_B:\partial B^r\times[0,\varepsilon)\longrightarrow B^r,
\]
with $c_D(x,0)=x$ and $c_B(z,0)=z$.  For $p=(x,z)\in\Sigma$, hold $x$ and
$z$ fixed and vary only the collar parameters:
\[
 (u,v)\longmapsto\bigl(c_D(x,u),c_B(z,v)\bigr).
\]
This gives a two-dimensional normal slice identified with the quadrant
$[0,\varepsilon)^2$.  In this slice, $u=0$ corresponds to
$\partial D\times B^r$, while $v=0$ corresponds to
$D\times\partial B^r$; the two boundary faces therefore meet at
$(u,v)=(0,0)$.  Choose a smooth rounded arc
$\Gamma\subset[0,\varepsilon)^2$ that cuts off a sufficiently small
neighborhood of $(0,0)$ and joins the remaining portions of the boundary edges
$\{u=0\}$ and $\{v=0\}$.  Choose $\Gamma$ to coincide with $\{u=0\}$ near one
endpoint and with $\{v=0\}$ near the other.  Since $\Sigma$ is compact and $W$
is a neighborhood of $\Sigma$, the arc may be chosen sufficiently close to
$(0,0)$ that using it in every normal slice modifies the boundary only inside
$W$.

In each normal slice, remove from $D\times B^r$ the small region between
$\Gamma$ and the corner $(0,0)$, retaining $\Gamma$ as the new boundary, and
leave $D\times B^r$ unchanged outside the product collar.  Denote the resulting
closed subset by $D^\sharp$.  Away from $W$, its boundary agrees with the
unchanged smooth portions of the original boundary; within the rounded collar,
the new part is $\Sigma\times\Gamma$.  This product is a smooth hypersurface,
and the endpoint conditions on $\Gamma$ make it join smoothly to the unchanged
portions of the two boundary faces.  Thus $\partial D^\sharp$ is smooth, so
$D^\sharp$ is a compact smooth domain contained in $D\times B^r$.

The passage
from $D\times B^r$ to $D^\sharp$ removes only points lying in $W$.  By the
choice of $W$,
\[
 W\cap\bigl(C_I\times\operatorname{supp}\gamma\bigr)=\varnothing,
 \qquad
 W\cap\bigl(D\times\{0\}\bigr)=\varnothing.
\]
Thus no point of either set is removed, and consequently
\[
 \bigl(C_I\times\operatorname{supp}\gamma\bigr)
   \cup\bigl(D\times\{0\}\bigr)\subset D^\sharp.
\]
Both $W$ and the rounded arc $\Gamma$ used in every normal slice are chosen
once, independently of $s$; hence the resulting domain $D^\sharp$ is fixed
throughout the isotopy.
Finally, the $\Gamma$-rounding described above turns $D\times B^r$, the
product of an $m$-disk and an $r$-disk, into the smooth domain $D^\sharp$,
which is diffeomorphic to the closed $(m+r)$-disk.

Since
$(D\times B^r)\setminus D^\sharp\subset W$, every $J_s$ fixes this complement
pointwise.  As $J_s$ is a bijection of $D\times B^r$, it follows that
$J_s(D^\sharp)=D^\sharp$.  Moreover, the rounded part of
$\partial D^\sharp$ lies in $W\subset U_J$, while its unchanged part lies in
$\partial(D\times B^r)\subset U_J$.  Thus $\partial D^\sharp\subset U_J$.
Since every $J_s$ is the identity on $U_J$, its restriction $\overline I_s$
is the identity on the neighborhood $U_J\cap D^\sharp$ of
$\partial D^\sharp$.  With $\kappa(x)=(x,0)$ one has
\[
 \overline I_s(\kappa(x))
   =\bigl(I_{s\gamma(0)}(x),0\bigr)
   =\kappa(I_s(x)).
\]

If a sequence of isotopies is given, let $C$ be their common fixed open
boundary collar and choose a smaller collar $C'$ with
$\overline{C'}\subset C$.  For each $j$, let $C_{I^j}$ denote the joint support
of $(I_s^j)$.  Then $C_{I^j}\subset D\setminus C'$.  Hence the joint support of
every corresponding family $(J_s^j)$ is contained in the fixed compact set
\[
 K_0=(D\setminus C')\times\operatorname{supp}\gamma,
\]
which is contained in $\operatorname{int}(D\times B^r)$.  With the fixed
neighborhood $V_\partial$ chosen above, put
\[
 U_*=(C\times B^r)\cup(D\times V_\partial),
 \qquad
 K_*=K_0\cup(D\times\{0\}).
\]
Every $J_s^j$ is the identity on $U_*$, while the compact set $K_*$ is
disjoint from $\Sigma=\partial D\times\partial B^r$.  Choose an open
neighborhood $W_*$ of $\Sigma$ such that
\[
 \overline{W_*}\subset U_*,
 \qquad
 \overline{W_*}\cap K_*=\varnothing.
\]
Perform the preceding $\Gamma$-rounding once, using fixed collar coordinates
and a fixed rounded arc, with the modification supported in $W_*$.  The
resulting domain $D^\sharp$ and the embedding $\kappa(x)=(x,0)$ are independent
of $j$.  Since every $J_s^j$ is the identity on $W_*$, the same
complement-fixing argument gives $J_s^j(D^\sharp)=D^\sharp$.

To verify the convergence assertion, write
\[
 \mathcal F_j:[0,1]\times D\longrightarrow D,
 \qquad
 \mathcal F_j(t,x)=I_t^j(x),
\]
and let $\mathcal F_\infty(t,x)=x$.  By hypothesis,
\[
 \mathcal F_j\longrightarrow\mathcal F_\infty
 \quad\text{in }C^\infty([0,1]\times D,D).
\]
Define the fixed smooth map
\[
 \Phi:[0,1]\times D\times B^r\longrightarrow[0,1]\times D,
 \qquad
 \Phi(s,x,z)=(s\gamma(z),x).
\]
The evaluation map of the thickened isotopy is
\[
 \mathcal J_j(s,x,z):=J_s^j(x,z)
   =\bigl((\mathcal F_j\circ\Phi)(s,x,z),z\bigr).
\]
Continuity of precomposition by the fixed smooth map $\Phi$ in the
$C^\infty$ topology therefore gives
\[
 \mathcal J_j\longrightarrow
 \bigl((s,x,z)\longmapsto(x,z)\bigr)
\]
in $C^\infty([0,1]\times D\times B^r,D\times B^r)$.

Let
\[
 \overline{\mathcal J}_j:[0,1]\times D^\sharp\longrightarrow D^\sharp,
 \qquad
 \overline{\mathcal J}_j(s,y)=\overline I_s^j(y).
\]
Since $D^\sharp$ is independent of $j$ and every $J_s^j$ preserves
$D^\sharp$, all these evaluation maps have the same domain and target.
Write $\iota:D^\sharp\hookrightarrow D\times B^r$ for the fixed smooth
inclusion.  Then
\[
 \iota\circ\overline{\mathcal J}_j
 =\mathcal J_j\circ(\id_{[0,1]}\times\iota).
\]
Restriction to the fixed smooth subdomain $[0,1]\times D^\sharp$ is
continuous in the $C^\infty$ topology, and the intrinsic $C^\infty$ topology
on maps into $D^\sharp$ is the topology induced by the fixed embedding
$\iota$.  Hence the preceding ambient convergence yields
\[
 \overline{\mathcal J}_j\longrightarrow
 \bigl((s,y)\longmapsto y\bigr)
 \quad\text{in }C^\infty([0,1]\times D^\sharp,D^\sharp).\qedhere
\]
\end{proof}

The following equality-set criterion isolates a distinguished invariant set
once a compact neighborhood separates it from the remainder of the equality
set.  It will supply the isolation step in
\cref{prop:localized-isolation}.

\begin{lemma}
\label[lemma]{lem:local-equality-isolation}
Let $T:Z\to Z$ be a homeomorphism of a compact metric space and let
$A:Z\to\mathbb R$ be continuous with $A\circ T\geq A$.  Put
\[
 Z_0=\{z\in Z:A(Tz)=A(z)\}.
\]
Suppose that $C\subset Z_0$ is compact and $T$-invariant, that
$A|_C\equiv c$, and that there is a compact set $W\subset Z$ such that
\[
 C\subset\operatorname{int}_Z W,
 \qquad Z_0\cap W=C.
\]
Then
\[
 C=\bigcap_{n\in\mathbb Z}T^n(W),
\]
and in particular $C$ is an isolated invariant set.
\end{lemma}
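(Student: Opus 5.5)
The inclusion $C\subset\bigcap_{n\in\mathbb Z}T^n(W)$ is immediate: $C$ is $T$-invariant and $T$ is a homeomorphism, so $T^n(C)=C\subset W$ for every $n$.

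For the reverse inclusion, let $I=\bigcap_{n}T^n(W)$. This is the set of points whose full orbit stays in $W$. It is compact and $T$-invariant. I will show $I\subset Z_0$; then $I\subset Z_0\cap W=C$. Fix $x\in I$. Along its orbit the sequence $a_n=A(T^nx)$ is nondecreasing in $n\in\mathbb Z$ and bounded, because $A$ is continuous on the compact space $Z$. So $a_n$ has limits $a_{\pm\infty}$ as $n\to\pm\infty$.

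The key step is an omega-limit argument. Let $y$ be any accumulation point of $(T^nx)_{n\geq0}$; it lies in the closed set $W$. Its whole orbit is made of limits of points $T^{n_k+m}x$, so $y\in I$. By continuity, $A(T^my)=a_{+\infty}$ for every $m$, and hence $y\in Z_0$. Thus the omega-limit set $\omega(x)$ is a nonempty subset of $Z_0\cap W=C$. The same reasoning gives $\alpha(x)\subset C$. Since $A|_C\equiv c$, we get $a_{+\infty}=a_{-\infty}=c$. Monotonicity then forces $a_n=c$ for all $n$, so $A(Tx)=A(x)$, that is, $x\in Z_0$. With $x\in W$ this gives $x\in C$.

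Isolation then follows by taking $U=\operatorname{int}_ZW$. Clearly $C\subset\bigcap_nT^n(U)\subset\bigcap_nT^n(W)=C$, so equality holds throughout.

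The main point to check carefully is the omega-limit step: accumulation points of a forward orbit lying in $W$ have their full orbits in $W$. This uses that $W$ is closed, that the limits are taken along the orbit of a point whose entire bi-infinite orbit lies in $W$, and that $T^m$ is continuous. Once $\alpha(x),\omega(x)\subset C$ is established, the constancy of $A$ on $C$ closes the argument.
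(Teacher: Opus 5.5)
Your proposal is correct and follows essentially the same route as the paper. Both take the bounded monotone sequence $a_n=A(T^nx)$, show that limit points of the forward and backward orbit lie in $Z_0\cap W=C$, conclude $a_{+\infty}=a_{-\infty}=c$ and hence that $(a_n)$ is constant, and use $\operatorname{int}_ZW$ as the isolating neighborhood; you get $A(Ty)=A(y)$ from $A(T^my)=a_{\pm\infty}$, where the paper uses $a_{n+1}-a_n\to0$ and continuity of $A\circ T-A$ (your extra check that $y$'s full orbit stays in $W$ is harmless but unnecessary).
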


\begin{proof}
Only the reverse inclusion requires proof.  Suppose that the full orbit of
$z$ is contained in $W$ and put $a_n=A(T^nz)$.  The sequence $(a_n)_{n\in
\mathbb Z}$ is nondecreasing and bounded, so it has limits $a_-$ and $a_+$ at
$-\infty$ and $+\infty$.  For $N\geq1$,
\[
 \begin{aligned}
 \sum_{n=0}^{N-1}(a_{n+1}-a_n)
     &=a_N-a_0\longrightarrow a_+-a_0,\\
 \sum_{n=-N}^{-1}(a_{n+1}-a_n)
     &=a_0-a_{-N}\longrightarrow a_0-a_-.
 \end{aligned}
\]
Hence $a_{n+1}-a_n\to0$ as $n\to\pm\infty$.

By compactness, after passing
to subsequences,
\[
 T^{n_k}z\longrightarrow z_+\in W,
 \qquad
 T^{m_k}z\longrightarrow z_-\in W
\]
for some $n_k\to+\infty$ and $m_k\to-\infty$.  Since $A\circ T-A$ is
continuous, the preceding limit gives $z_+,z_-\in Z_0\cap W=C$.
Continuity of $A$ then yields
\[
 a_+=A(z_+)=c=A(z_-)=a_-.
\]
Thus $(a_n)$ is constant.  Every point $T^nz$ then belongs to
$Z_0\cap W=C$, proving the assertion.

Since
$C\subset\operatorname{int}_Z W$ and $C$ is $T$-invariant,
\[
 C\subset \bigcap_{n\in\mathbb Z}T^n(\operatorname{int}_Z W)
 \subset \bigcap_{n\in\mathbb Z}T^n(W)=C.
\]
Thus $\operatorname{int}_Z W$ is an open isolating neighborhood of $C$.
\end{proof}

\begin{proposition}
\label[proposition]{prop:localized-isolation}
Let $D$ be a closed smooth $m$-disk, where $m\geq2$, and let
$f:D\to D$ be a $C^\infty$ diffeomorphism joined to $\id_D$ by a smooth
isotopy relative to the boundary.  Let $\varnothing\ne X\subset\operatorname{int}D$
be compact and $f$-invariant, and suppose that $f|_X$ has no fixed point.  There
are a $C^\infty$ diffeomorphism $H$ of $D\times S^1$, equal to the identity on
a neighborhood of $\partial D\times S^1$, and an invariant set
\[
 K=X\times\{\pi/2\}
\]
with the following properties.  Writing $\iota_K(x)=(x,\pi/2)$, one has
\[
 (H|_K)\circ\iota_K=\iota_K\circ(f|_X),
 \qquad K\cap\operatorname{Fix}(H)=\varnothing,
\]
the set $K$ is isolated, and every $H$-invariant probability is supported on
$\operatorname{Fix}(H)\cup K$.  Moreover,
\begin{equation}
\label{eq:local-ergodic-measures}
 \Me(H)=\{\delta_z:z\in\operatorname{Fix}(H)\}
       \cup(\iota_K)_*\Me(f|_X).
\end{equation}
\begin{equation}
\label{eq:local-entropies}
 \He(H)=\{0\}\cup\He(f|_X),
 \qquad \htop(H)=\htop(f|_X).
\end{equation}
Finally, there are $C^\infty$ diffeomorphisms $H_n$ of $D\times S^1$, all
equal to the identity near the boundary, such that
\begin{equation}
\label{eq:local-approximation}
 H_n\longrightarrow H\quad\text{in }C^\infty,
 \qquad \htop(H_n)=0.
\end{equation}
The map $H$ and every $H_n$ are smoothly isotopic to the identity relative to
the same boundary collar.
\end{proposition}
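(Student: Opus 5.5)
Let $(I_s)_{0\le s\le1}$ be the given relative-boundary isotopy with $I_0=\id_D$, $I_1=f$, and $U_\partial$ a fixed boundary neighborhood on which every $I_s$ is the identity. I would define $H$ on $D\times S^1$ by coupling the time of the isotopy to the circle coordinate, together with a nonnegative drift in $\theta$:
\[
 H(x,\theta)=\bigl(I_{\beta(\theta)}(x),\ \theta+\epsilon\,\rho(x,\theta)\bigr).
\]
The smooth function $\beta:S^1\to[0,1]$ equals $1$ near $\theta=\pi/2$ and vanishes outside a small arc around $\pi/2$. The function $\rho\ge0$ is chosen by \cref{lem:zero-set}, and $\epsilon>0$ is small. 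With $L=-\cos\theta$, the drift should move points toward $\theta=\pi$ on both sides of $\pi/2$ when it is nonzero, so that $L\circ H\ge L$. Concretely, take $\rho(x,\theta)=\rho_0(x,\theta)\sin\theta$-type expressions, which move $\theta$ in the direction increasing $-\cos\theta$ and vanish at $\theta\in\{0,\pi\}$. The extra factor must vanish exactly on $X\times\{\pi/2\}$ among points near $\pi/2$, and where $\beta\equiv0$ it must vanish exactly on a set of fixed points. For a simple version, let $\rho_0$ vanish exactly on $(X\times\{\pi/2\})\cup(\overline{U'_\partial}\times S^1)\cup (D\times\{0,\pi\})$, with $U'_\partial\subset U_\partial$ a smaller collar. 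Then the equality set $\{L\circ H=L\}$ consists of $K$, the collar points (which are fixed), and the points with $\theta\in\{0,\pi\}$ (fixed because $\beta=0$ there). For small $\epsilon$ and a $C^\infty$-flat $\rho_0$ near the collar, $H$ is a diffeomorphism by the inverse function theorem argument of \cref{lem:relative-boundary-thickening}: the map is a small perturbation of the fiberwise diffeomorphism $(x,\theta)\mapsto(I_{\beta(\theta)}x,\theta)$, which has a smooth inverse. $H$ is the identity near $\partial D\times S^1$. On $K$ we have $\beta=1$ and $\rho=0$, so $H\circ\iota_K=\iota_K\circ f$, and $K\cap\operatorname{Fix}(H)=\varnothing$ because $f|_X$ has no fixed points.

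Once $H$ is built, the measure statements follow. By \cref{lem:coboundary}, every invariant probability is supported on the equality set, which is $K\sqcup\operatorname{Fix}(H)$. Hence ergodic measures are either Dirac masses at fixed points or ergodic measures on $K$, and the latter are exactly $(\iota_K)_*\Me(f|_X)$ by the conjugacy. This gives \eqref{eq:local-ergodic-measures}. The entropy formulas \eqref{eq:local-entropies} then come from the ergodic variational principle, since the Dirac masses contribute $0$ and $\htop(f|_X)\ge0$. For isolation, apply \cref{lem:local-equality-isolation} with $A=L$, $C=K$, $c=0$, and $W$ a compact neighborhood $\{|\theta-\pi/2|\le\delta\}\times D'$, where $D'\subset\operatorname{int}D\setminus\overline{U'_\partial}$ is a compact neighborhood of $X$. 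In $W$ the equality set is exactly $K$, because fixed points with $\theta=0,\pi$ or in the collar are excluded.

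For the approximations, set $H_n$ to be the same formula with $\rho_0$ replaced by $\rho_0+\tfrac1n\eta$. Here $\eta\ge0$ is a smooth bump that is positive near $X\times\{\pi/2\}$ and supported where the collar is absent, so $H_n\to H$ in $C^\infty$ and the equality set of $H_n$ becomes $\{L\circ H_n=L\}=\operatorname{Fix}(H_n)$. Then \cref{lem:coboundary} forces every ergodic measure to be a Dirac mass, and the variational principle gives $\htop(H_n)=0$. The isotopies to the identity are obtained by replacing $\beta$ with $u\beta$ and $\epsilon$ with $u\epsilon$ for $u\in[0,1]$; each stage is a diffeomorphism for the same reasons and equals the identity on the common collar.

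The main obstacle is making $H$ and the interpolating maps genuine diffeomorphisms while keeping $L\circ H-L\ge0$ with the exact zero set. The $\theta$-component is $\theta+\epsilon\rho$, where $\rho$ depends on $x$ and has sign forced by $\sin\theta$, and it must be monotone in $\theta$ uniformly. I would first try to choose $\epsilon$ small relative to the $C^1$ norm of $\rho$ on the compact domain. If $x$-dependence obstructs invertibility, I would instead build $H$ as a composition $\Theta\circ\mathcal I$. Here $\mathcal I(x,\theta)=(I_{\beta(\theta)}x,\theta)$ is a diffeomorphism. The second map is $\Theta(x,\theta)=(x,\theta+\epsilon\rho(x,\theta))$, which is a diffeomorphism because for each fixed $x$ the map $\theta\mapsto\theta+\epsilon\rho$ is a circle diffeomorphism once $\epsilon|\partial_\theta\rho|<1$. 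This factorization also makes the equality-set computation transparent.
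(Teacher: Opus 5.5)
Your proposal is correct and is essentially the paper's argument. The paper also builds $H=R\circ F$ with $F(x,\theta)=(I_{\sin^2\theta}(x),\theta)$ followed by a fiberwise circle drift; there the drift is $u\mapsto u+\varepsilon b(y)\sin u\,(\rho(y)+1-\sin u)$ with $\rho^{-1}(0)=X$, in place of your zero-set function on the product. It then checks that $L=-\cos\theta$ has equality set $K$ plus fixed points, and uses the coboundary lemma, the equality-set isolation lemma, a drift perturbation that removes $K$ from the equality set for $H_n$, and parameter scaling for the isotopies.

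Use your factorized form $\Theta\circ\mathcal I$ rather than the single formula. For the single formula, the inverse-function argument only gives smoothness of an inverse, and bijectivity would need a separate argument such as a degree argument.
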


\begin{proof}
We first construct $H=R\circ F$ and compute the equality set of the
circle-coordinate Lyapunov function.  We then derive the invariant-measure and
entropy classification and isolate $K$, and finally perturb the circle drift
to obtain $H_n$ and the relative-boundary isotopies.

Let $(I_s)_{0\leq s\leq1}$ be a relative-boundary isotopy from $\id_D$ to
$f$.  Choose a smooth nondecreasing map $\beta:[0,1]\to[0,1]$ which is
identically $0$ near $0$ and identically $1$ near $1$, and replace $I_s$ by
$I_{\beta(s)}$.  Thus we may assume that
\[
 I_s=\id_D\quad\text{for $s$ near $0$},
 \qquad
 I_s=f\quad\text{for $s$ near $1$}.
\]

The joint support
\[
 C_I=\overline{\{x\in D:I_s(x)\ne x
                    \text{ for some }s\in[0,1]\}}
\]
is a compact subset of $\operatorname{int}D$.  Choose
$b\in C^\infty(D,[0,1])$ which equals $1$ on a neighborhood of
$C_I\cup X$ and vanishes on a neighborhood of $\partial D$.
Let $\widehat D=D_+\cup_{\partial D}D_-$ be the smooth double of $D$,
obtained by gluing two copies of $D$ along their boundary; thus
$\widehat D\cong S^m$.  Identify $D$ with $D_+$.  Since
$X\subset\operatorname{int}D$ is closed in $\widehat D$,
\cref{lem:zero-set} gives
$\widehat\rho\in C^\infty(\widehat D,[0,\infty))$ such that
$\widehat\rho^{-1}(0)=X$.  Set $\rho=\widehat\rho|_D$; then
\[
 \rho^{-1}(0)=X.
\]
Put $M_\rho=\max_D\rho$ and fix
\[
 0<\varepsilon<(M_\rho+4)^{-1}.
\]
Write
\[
 p:\mathbb R\longrightarrow S^1=\mathbb R/(2\pi\mathbb Z)
\]
for the quotient map.

For $\theta\in S^1$, set
\[
 g_\theta=I_{\sin^2\theta},
 \qquad
 F(x,\theta)=(g_\theta(x),\theta).
\]
The map $F$ is a global $C^\infty$ diffeomorphism.  Indeed, the endpoint
stationarity lets us extend $(I_s)$ constantly to a smooth family on an open
interval $J\supset[0,1]$.  The constant extension retains the
relative-boundary property.  Since $C_I\subset\operatorname{int}D$ and $b$
vanishes near $\partial D$, fix a neighborhood $U_\partial\subset D$ of
$\partial D$ on which every $I_s$, $s\in J$, is the identity and
$b|_{U_\partial}=0$.

Define the map
\[
 \mathcal I:D\times J\longrightarrow D\times J,
 \qquad
 \mathcal I(x,s)=(I_s(x),s).
\]
It is bijective and has differential
\[
 D\mathcal I_{(x,s)}=
 \begin{pmatrix}
   D(I_s)_x & \partial_s I_s(x)\\
   0        & 1
\end{pmatrix},
\]
which is everywhere invertible.  Since $\mathcal I$ is the identity on
$U_\partial\times J$, define
\[
 \widetilde{\mathcal I}:\widehat D\times J
     \longrightarrow\widehat D\times J
\]
by
\[
 \widetilde{\mathcal I}|_{D_+\times J}=\mathcal I,
 \qquad
 \widetilde{\mathcal I}|_{D_-\times J}=\id_{D_-\times J}.
\]
This piecewise definition is smooth across $\partial D\times J$, because both
maps are the identity on a neighborhood of that set.  The map
$\widetilde{\mathcal I}$ is bijective, and its differential is everywhere
invertible by the displayed formula on $D_+\times J$ and by the identity
formula on $D_-\times J$.  Thus, by the inverse function theorem for manifolds
\cite[Theorem~4.5]{Lee2013}, $\widetilde{\mathcal I}$ is a $C^\infty$
diffeomorphism.  Restricting its inverse to $D_+\times J$ gives the
smooth inverse
\[
 \mathcal I^{-1}(y,s)=(I_s^{-1}(y),s).
\]
Consequently
\[
 F^{-1}(y,\theta)
   =\bigl(I_{\sin^2\theta}^{-1}(y),\theta\bigr)
\]
is smooth.

Using the cutoff $b$ chosen above, for $y\in D$ and $u\in\mathbb R$, put
\[
 a_b(y,u)=b(y)\sin u\bigl(\rho(y)+1-\sin u\bigr),
 \qquad
 \widetilde r_y(u)=u+\varepsilon a_b(y,u).
\]
Since $a_b(y,\cdot)$ is $2\pi$-periodic,
\[
 \widetilde r_y(u+2\pi)=\widetilde r_y(u)+2\pi.
\]
Moreover,
\begin{equation}
\label{eq:local-fiber-derivative}
 \partial_u\widetilde r_y(u)
 =1+\varepsilon b(y)\cos u
       \bigl(\rho(y)+1-2\sin u\bigr)
 \geq1-\varepsilon(M_\rho+3)>0.
\end{equation}
Also,
\[
 |\widetilde r_y(u)-u|\leq\varepsilon(M_\rho+2),
\]
so $\widetilde r_y(u)\to\pm\infty$ as $u\to\pm\infty$.  Thus every
$\widetilde r_y$ is an increasing diffeomorphism of $\mathbb R$, and it
induces an orientation-preserving circle diffeomorphism $r_y$ by
\[
 r_y(p(u))=p(\widetilde r_y(u)).
\]

To verify smooth dependence of the inverse on $y$, consider
\[
 \mathcal R:D\times\mathbb R\longrightarrow D\times\mathbb R,
 \qquad
 \mathcal R(y,u)=(y,\widetilde r_y(u)).
\]
This map is bijective, and its differential is block triangular with diagonal
blocks $\id_{T_yD}$ and the positive scalar in
\eqref{eq:local-fiber-derivative}.  Since $b=0$ on $U_\partial$, the map
$\mathcal R$ is the identity on $U_\partial\times\mathbb R$.  Thus
$\mathcal R$ extends by the identity to a smooth bijection of
$\widehat D\times\mathbb R$ whose differential is everywhere invertible.
Applying the inverse function theorem to this extension shows that
$\mathcal R^{-1}$ is smooth.  Let $\tau(y,u)=(y,u+2\pi)$.  The relation above
gives
\[
 \mathcal R\circ\tau=\tau\circ\mathcal R,
\]
and hence also
\[
 \mathcal R^{-1}\circ\tau=\tau\circ\mathcal R^{-1}.
\]
Thus both maps descend through the covering $\id_D\times p$ to mutually
inverse $C^\infty$ maps.  Consequently,
\[
 R(y,p(u))=\bigl(y,p(\widetilde r_y(u))\bigr)
\]
is a global $C^\infty$ diffeomorphism of $D\times S^1$.  Therefore
\[
H=R\circ F
\]
is a $C^\infty$ diffeomorphism.  On the fixed neighborhood $U_\partial$
chosen above, every $I_s$ is the identity and $b=0$.  Hence $F$, $R$, and $H$
are all the identity on $U_\partial\times S^1$.

Define
\[
 L:D\times S^1\longrightarrow\mathbb R,
 \qquad
 L(x,\theta)=-\cos\theta.
\]
Since $F$ preserves the circle coordinate, $L\circ F=L$.  We first determine
the increment of $L$ under $R$.  If $0<u<\pi$, then
\[
 0\leq\varepsilon a_b(y,u)
 \leq\varepsilon(M_\rho+1)\sin u
 <\pi-u,
\]
where we used $\sin u\leq\pi-u$.  Hence
\[
 u\leq\widetilde r_y(u)<\pi.
\]
Because $-\cos$ is strictly increasing on $(0,\pi)$,
\[
 -\cos\widetilde r_y(u)\geq-\cos u,
\]
with equality precisely when
\[
 b(y)=0
 \quad\text{or}\quad
 \bigl(\rho(y)=0\ \text{and}\ u=\pi/2\bigr).
\]
If instead $\pi<u<2\pi$, then
\[
 0\leq-\varepsilon a_b(y,u)
 \leq\varepsilon(M_\rho+2)(-\sin u)
 <u-\pi,
\]
and therefore
\[
 \pi<\widetilde r_y(u)\leq u.
\]
Since $-\cos$ is strictly decreasing on $(\pi,2\pi)$, its value again does
not decrease, and equality holds exactly when $b(y)=0$.  At $u=0$ and
$u=\pi$, one has $a_b(y,u)=0$.  It follows that
\[
 L\circ R-L\geq0
\]
and that its zero set is
\[
 \bigl(b^{-1}(0)\times S^1\bigr)
 \cup\bigl(D\times\{0,\pi\}\bigr)
 \cup\bigl(X\times\{\pi/2\}\bigr).
\]

We pull this equality set back by $F$.  Clearly
\[
 F^{-1}\bigl(D\times\{0,\pi\}\bigr)
   =D\times\{0,\pi\},
\]
because $g_0=g_\pi=I_0=\id_D$, and
\[
 F^{-1}\bigl(X\times\{\pi/2\}\bigr)
   =X\times\{\pi/2\}=K,
\]
because $g_{\pi/2}=I_1=f$ and $f^{-1}(X)=X$.

It remains to consider $b^{-1}(0)$.  Since $b\equiv1$ on a neighborhood
of $C_I$,
\[
 b^{-1}(0)\subset D\setminus C_I.
\]
By the definition of $C_I$, every point of $D\setminus C_I$ is fixed by
every $I_s$.  Hence $I_s$ fixes $b^{-1}(0)$ pointwise, and injectivity gives
\[
 I_s^{-1}\bigl(b^{-1}(0)\bigr)=b^{-1}(0)
 \qquad(s\in[0,1]).
\]
Consequently,
\[
 F^{-1}\bigl(b^{-1}(0)\times S^1\bigr)
   =b^{-1}(0)\times S^1.
\]
Since $L\circ F=L$, we have proved
\begin{equation}
\label{eq:local-equality-set}
 L\circ H-L\geq0,
 \qquad
 \{L\circ H=L\}=P\cup K,
\end{equation}
where
\[
 P=\bigl(b^{-1}(0)\times S^1\bigr)
   \cup\bigl(D\times\{0,\pi\}\bigr).
\]

If $x\in b^{-1}(0)$, then $I_s(x)=x$ for every $s$ and
$a_b(x,u)=0$ for every $u$.  Hence both $F$ and $R$ fix
$\{x\}\times S^1$.  If $\theta\in\{0,\pi\}$, then
$g_\theta=I_0=\id_D$ and $a_b(x,\theta)=0$, so both maps fix
$D\times\{\theta\}$.  Therefore
\[
 H|_P=\id_P.
\]

Moreover, $b=1$ on $X$ and $\pi/2\notin\{0,\pi\}$, so
$P\cap K=\varnothing$.  For $x\in X$, since $\sin^2(\pi/2)=1$ and
$I_1=f$,
\[
 F(x,\pi/2)=(I_1(x),\pi/2)=(f(x),\pi/2).
\]
Moreover, $f(x)\in X=\rho^{-1}(0)$, and hence
\[
 a_b\bigl(f(x),\pi/2\bigr)=b(f(x))\rho(f(x))=0.
\]
Therefore $R$ fixes $(f(x),\pi/2)$, and consequently
\[
 H(x,\pi/2)
   =R\bigl(F(x,\pi/2)\bigr)
   =(f(x),\pi/2).
\]
Thus $H|_K$ is conjugate to $f|_X$ through $\iota_K$ and therefore has no
fixed point.  Every fixed point of $H$ belongs to
$\{L\circ H=L\}=P\cup K$.  Consequently,
\[
 \operatorname{Fix}(H)=P.
\]

By \cref{lem:coboundary} and \eqref{eq:local-equality-set}, every invariant
probability is supported on $P\cup K$.  Since $P$ and $K$ are disjoint compact
invariant sets, an ergodic measure gives full mass to exactly one of them.
On $P$ the map is the identity, so its ergodic measures are precisely the
Dirac measures $\delta_z$, $z\in P=\operatorname{Fix}(H)$.  On $K$ the map
is conjugate to $f|_X$.  This proves \eqref{eq:local-ergodic-measures}.
Entropy is preserved under that conjugacy, while every fixed-point Dirac
measure has entropy zero; hence the first identity in
\eqref{eq:local-entropies} follows.  The ergodic variational principle
\cite{Walters1982} gives
\[
 \htop(H)=\htop(f|_X),
\]
which proves the second identity in \eqref{eq:local-entropies}.

To isolate $K$, choose a compact neighborhood $W$ of $K$ whose interior
contains $K$ and which is disjoint from $P$.  By
\eqref{eq:local-equality-set},
\[
 W\cap\{L\circ H=L\}=K,
 \qquad L|_K\equiv0.
\]
Applying \cref{lem:local-equality-isolation} with $T=H$, $A=L$, and $C=K$
proves that $K$ is isolated.

Finally, let $t_n\downarrow0$ with $0<t_n<1$ and put
\[
 a_{n,b}(y,u)
 =b(y)\sin u\bigl(\rho(y)+1+t_n-\sin u\bigr),
 \qquad
 \widetilde r_{n,y}(u)=u+\varepsilon a_{n,b}(y,u).
\]
Then
\[
 \partial_u\widetilde r_{n,y}(u)
 \geq1-\varepsilon(M_\rho+4)>0
\]
and the same bounded-displacement and inverse-function argument used above
shows that the $\widetilde r_{n,y}$ induce global smooth fiber
diffeomorphisms $R_n$ of $D\times S^1$.  Set $H_n=R_n\circ F$.  Since
$F|_{U_\partial\times S^1}=\id$ and $b|_{U_\partial}=0$, the definition
of $a_{n,b}$ gives $\widetilde r_{n,y}(u)=u$ for every
$y\in U_\partial$, $u\in\mathbb R$, and $n$.  Hence both $R_n$ and $H_n$
are the identity on the fixed collar $U_\partial\times S^1$ for every $n$.
Moreover,
\[
 a_{n,b}-a_b=t_n b(y)\sin u,
\]
so $R_n\to R$, and hence $H_n\to H$, in the $C^\infty$ topology.

The added term $t_n$ makes $\rho(y)+1+t_n-\sin u$ strictly positive.
Repeating the two semicircle estimates above therefore gives
\[
 L\circ H_n-L\geq0,\qquad
 \{L\circ H_n=L\}=P=\operatorname{Fix}(H_n),
 \qquad H_n|_P=\id_P.
\]
By \cref{lem:coboundary}, every $H_n$-invariant probability is supported on
$P$.  Thus every invariant measure has zero metric entropy, and the
variational principle gives $\htop(H_n)=0$.  This proves
\eqref{eq:local-approximation}.

For the isotopy assertion, let $0\leq\lambda\leq1$ and put
\[
 \begin{aligned}
 F_\lambda(x,\theta)
   &=\bigl(I_{\lambda\sin^2\theta}(x),\theta\bigr),\\
 \widetilde r_{\lambda,y}(u)
   &=u+\lambda\varepsilon a_b(y,u),\\
 \widetilde r_{n,\lambda,y}(u)
   &=u+\lambda\varepsilon a_{n,b}(y,u).
 \end{aligned}
\]
The last two maps have derivatives bounded below by
$1-\varepsilon(M_\rho+3)>0$ and $1-\varepsilon(M_\rho+4)>0$, respectively.
Hence the preceding inverse-function arguments give smooth families of fiber
diffeomorphisms $R_\lambda$ and $R_{n,\lambda}$.  The paths
$R_\lambda\circ F_\lambda$ and $R_{n,\lambda}\circ F_\lambda$ join the
identity to $H$ and $H_n$, respectively, and all their members equal the
identity on the same boundary collar.
\end{proof}

We now combine the Toeplitz entropy realization and disk-supported horseshoe
embedding from Section~2 with the relative-boundary thickening and localized
solid-torus construction above, and then embed the resulting model into $M$
and extend it by the identity.

\begin{proof}[Proof of \cref{thm:main}]
Fix $M$, $\mathscr S$, and $\mathfrak e$ as in the theorem.  Take the minimal
Toeplitz subshift $(Y,\sigma)$ and the affine entropy-preserving homeomorphism
$\phi:\mathscr S\to\Minv(\sigma|_Y)$ from
\cref{lem:entropy-source}.  By \cref{lem:horseshoe-embedding}, there are a
$C^\infty$ diffeomorphism $f$ of $S^2$, an invariant Cantor set $X$, and a
conjugacy $\chi:Y\to X$, and $f$ has support in the interior of a closed disk
$D_0\subset S^2$ containing $X$ in its interior.

Choose a closed square $Q$ and a larger closed disk $D$ such that
\[
 \operatorname{supp}(f)\cup X\subset\operatorname{int}Q,
 \qquad Q\subset\operatorname{int}D_0,
 \qquad D_0\subset\operatorname{int}D.
\]
Since $\operatorname{supp}(f)\subset\operatorname{int}Q$, the map $f$ is the
identity on a neighborhood of $\partial Q$; injectivity then implies
$f(Q)=Q$.  After identifying $Q$ smoothly with the unit square, $f|_Q$
belongs to the relative-boundary diffeomorphism group considered by Smale.
Smale's Theorem~B and its smooth-parametric form, Theorem~4
\cite{Smale1959}, give a jointly smooth isotopy $(J_s)_{0\leq s\leq1}$ from
$\id_Q$ to $f|_Q$, with every $J_s$ equal to the identity near $\partial Q$.

In these coordinates, set $\Delta(s,x):=J_s(x)-x\in\mathbb R^2$.  For each
$s$, the map $\Delta(s,\cdot)$ vanishes
near $\partial Q$, and hence, for every multi-index $\alpha$,
\[
 \partial_x^\alpha\Delta(s,x)=0
 \qquad(s\in[0,1],\ x\in\partial Q).
\]
The joint smoothness of $(s,x)\mapsto J_s(x)$ makes $\Delta$ jointly smooth.
Differentiating the displayed boundary identities with respect to $s$ shows
that every mixed derivative $\partial_s^k\partial_x^\alpha\Delta(s,x)$
vanishes on $[0,1]\times\partial Q$.  Extend each $J_s$ by the identity on
$D\setminus Q$, and denote the resulting family by $(I_s)_{0\leq s\leq1}$.
The vanishing just proved makes this family jointly smooth in $s$ and $x$, and
the endpoints satisfy
\[
 I_0=\id_D,
 \qquad
 I_1=f|_D.
\]
Since $Q\subset\operatorname{int}D$, this extended isotopy is the
identity on one fixed neighborhood of $\partial D$.

Since $Y$ is an infinite
minimal system, it has no periodic
point; in particular $f|_X$ has no fixed point.  We may therefore apply
\cref{lem:relative-boundary-thickening} with $r=d-3$.  Let
$D^\sharp$ be the resulting $(d-1)$-disk, let
$\kappa:D\to D^\sharp$ be the central-slice embedding, let
$(\overline I_s)$ be the resulting isotopy, and write
\[
 \overline f=\overline I_1,
 \qquad
 \overline X=\kappa(X).
\]
Then $\overline X\subset\operatorname{int}D^\sharp$ is compact and
$\overline f$-invariant, and
\[
 (\overline f|_{\overline X})\circ\kappa
   =\kappa\circ(f|_X).
\]

In particular, $\overline f|_{\overline X}$ has no fixed point.  Apply
\cref{prop:localized-isolation} to obtain $H$, $K$, and $(H_n)$ on
$D^\sharp\times S^1$.  Choose a fixed collar
$U_\partial\subset D^\sharp$ of $\partial D^\sharp$
such that $H$ and every $H_n$ are the identity on
$U_\partial\times S^1$, as provided by that proposition.

Inside a coordinate ball of $M$, take a small standard circle in a
two-dimensional coordinate plane.  Its tubular neighborhood is
diffeomorphic to $D^{d-1}\times S^1$.  Since $D^\sharp$ is a
$(d-1)$-disk, there is therefore a smooth embedding
\[
 \jmath:D^\sharp\times S^1\longrightarrow M.
\]
Put $N=\jmath(D^\sharp\times S^1)$, conjugate
$H$ and $H_n$ by $\jmath$ on $N$.  Their conjugates are the identity on
$\jmath(U_\partial\times S^1)$, a fixed collar of $\partial N$ inside $N$.
Hence their extensions by the identity to $M\setminus N$, denoted by $h$ and
$h_n$, are $C^\infty$ diffeomorphisms.  They are the identity on
$M\setminus\operatorname{int}N$, and \eqref{eq:local-approximation} gives
$h_n\to h$ in
$C^\infty$.

The relative-boundary isotopies in \cref{prop:localized-isolation}, after
conjugation by $\jmath$ and extension by the identity on $M\setminus N$, show
that
\[
 h\in\operatorname{Diff}_0^\infty(M),
 \qquad
 h_n\in\operatorname{Diff}_0^\infty(M)\quad(n\geq1).
\]

Put $K_M=\jmath(K)$.  The restriction $h|_{K_M}$ is conjugate to
$f|_X$, and hence to $\sigma|_Y$, via
$\jmath\circ\iota_K\circ\kappa\circ\chi$.
Consequently, $K_M$ is a minimal Cantor set and
$K_M\cap\operatorname{Fix}(h)=\varnothing$.

By construction,
\[
 h(N)=N=h^{-1}(N).
\]
Indeed, $h|_N=\jmath\circ H\circ\jmath^{-1}$ maps $N$ onto itself, whereas
$h$ is the identity on $M\setminus\operatorname{int}N$.

Let $\mu$ be an $h$-invariant probability, and set
\[
 \mu_N:=\mu|_N,
 \qquad
 \mu_{\mathrm{ext}}:=\mu|_{M\setminus N},
\]
as finite Borel measures on their respective subspaces.  When discussing
their supports or their sum, we extend them by zero to $M$.  Since both $N$
and $M\setminus N$ are $h$-invariant, so are $\mu_N$ and
$\mu_{\mathrm{ext}}$.  If $a:=\mu(N)>0$, then
\[
 \nu:=\frac{1}{a}(\jmath^{-1})_*\mu_N
\]
is an $H$-invariant probability.  The support conclusion of
\cref{prop:localized-isolation}, together with
$\jmath(\operatorname{Fix}(H))=\operatorname{Fix}(h)\cap N$, gives
\[
 \operatorname{supp}\mu_N
 \subset
 \jmath\bigl(\operatorname{Fix}(H)\cup K\bigr)
 =
 \bigl(\operatorname{Fix}(h)\cap N\bigr)\cup K_M.
\]
This inclusion is trivial when $a=0$.  On the other hand,
\[
 \operatorname{supp}\mu_{\mathrm{ext}}
 \subset \overline{M\setminus N}
 =M\setminus\operatorname{int}N
 \subset\operatorname{Fix}(h),
\]
because $h$ is the identity on $M\setminus\operatorname{int}N$.  Since
$\mu=\mu_N+\mu_{\mathrm{ext}}$, it follows that
\[
 \operatorname{supp}\mu\subset\operatorname{Fix}(h)\cup K_M.
\]

If $\mu$ is ergodic, the invariance of $N$ implies
$\mu(N)\in\{0,1\}$.  If $\mu(N)=0$, then $\mu$ is an ergodic probability for
the identity map on $M\setminus N$, and hence $\mu=\delta_z$ for some
$z\in M\setminus N$.  If $\mu(N)=1$, then
$(\jmath^{-1})_*(\mu|_N)$ is an ergodic $H$-invariant probability, so
\eqref{eq:local-ergodic-measures} applies and its classification transfers to
$h$ by $\jmath$.
Thus
\begin{equation}
\label{eq:global-ergodic-measures}
 \Me(h)=\{\delta_z:z\in\operatorname{Fix}(h)\}
 \cup (\jmath\circ\iota_K\circ\kappa\circ\chi)_*\Me(\sigma|_Y).
\end{equation}
The local isolating neighborhood of $K$ lies in the interior of the solid
torus, so its image shows that $K_M$ is isolated in $M$.

Set $\psi=\jmath\circ\iota_K\circ\kappa\circ\chi$ and
\[
 \mathcal F_{K_M}
 =\{\mu\in\Minv(h):\mu(K_M)=1\},
 \qquad \Phi=\psi_*\circ\phi.
\]
Then $\Phi$ is an affine homeomorphism from $\mathscr S$ onto
$\mathcal F_{K_M}$ and preserves the entropy function.

This face is
closed because, for any compatible metric $d$ on $M$,
\[
 \mathcal F_{K_M}
 =\left\{\mu\in\Minv(h):
      \int d(z,K_M)\,\dd\mu(z)=0\right\}.
\]
It is a face because a nontrivial convex combination assigns full mass to
$K_M$ only when both of its terms do.  Combining
\eqref{eq:global-ergodic-measures} with \cref{lem:entropy-source} and
\eqref{eq:local-entropies} gives the asserted ergodic measures and entropy
spectrum and, by the ergodic variational principle,
$\htop(h)=\max_{p\in\mathscr S}\mathfrak e(p)$.  This proves all the assertions
in
parts~\textup{(i)}--\textup{(ii)}, with the set denoted by $K_M$ here taken as
the set $K$ in the theorem statement.

Finally, let $\mu$ be an ergodic $h_n$-invariant probability.  Since
$h_n(N)=N=h_n^{-1}(N)$, ergodicity gives $\mu(N)\in\{0,1\}$.  If
$\mu(N)=0$, then $\mu$ is an ergodic probability for the identity map on
$M\setminus N$, and hence $\mu=\delta_z$ for some $z\in M\setminus N$; in
particular, $h_\mu(h_n)=0$.  If $\mu(N)=1$, then
\[
 \nu=(\jmath^{-1})_*(\mu|_N)
\]
is an ergodic $H_n$-invariant probability.  Since
$h_n|_N=\jmath\circ H_n\circ\jmath^{-1}$, invariance of measure-theoretic
entropy under conjugacy and \eqref{eq:local-approximation} give
\[
 h_\mu(h_n)=h_\nu(H_n)
 \leq \htop(H_n)=0.
\]
Thus every ergodic $h_n$-invariant probability has zero entropy, and the
ergodic form of the variational principle gives $\htop(h_n)=0$.  This proves
part~\textup{(iii)}.
\end{proof}

We now prove \cref{cor:compact-spectra} by a direct choice of the simplex in
Theorem~1.1.

\begin{proof}[Proof of \cref{cor:compact-spectra}]
Let $\mathscr S=\mathcal P(A)$, with the weak$^{*}$ topology, and set
$\mathfrak e(\lambda)=\int_A a\,\dd\lambda(a)$.  Then $\mathscr S$ is a compact
metrizable Choquet simplex, $\mathfrak e$ is nonnegative, continuous, and
affine, and $\operatorname{ex}\mathscr S=\{\delta_a:a\in A\}$.  Thus
$\mathfrak e(\operatorname{ex}\mathscr S)=A$ and
$\max_{\mathscr S}\mathfrak e=\max A$.  Apply \cref{thm:main}(ii).
\end{proof}

\section{Counterexamples near the identity}
\label{sec:near-identity}

We first record three auxiliary facts.  Lemma~4.1 extends near-identity disk
diffeomorphisms relative to a fixed outer boundary, Lemma~4.2 extracts the
$1/N$ entropy scaling from primitive-renormalization data, and Lemma~4.3 makes
the localized implantation converge smoothly to the identity.

\begin{lemma}
\label[lemma]{lem:relative-extension}
Let $D\subset\operatorname{int}\widehat D$ be concentric closed disks in
$\mathbb R^2$.  If
$q_j\in\operatorname{Diff}^\infty(D)$ and
$q_j\to\id_D$ in the $C^\infty$ topology, then, after discarding finitely many
terms, there are
$\widehat q_j\in\operatorname{Diff}^\infty(\widehat D)$ such that
\[
 \begin{aligned}
 \widehat q_j|_D&=q_j,\\
 \widehat q_j&=\id_{\widehat D}
   &&\text{on one fixed neighborhood of }\partial\widehat D,\\
 \widehat q_j&\longrightarrow\id_{\widehat D}
   &&\text{in }C^\infty.
 \end{aligned}
\]
\end{lemma}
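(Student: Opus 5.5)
The plan is to extend $q_j$ to $\widehat D$ by interpolating linearly between $q_j$ and the identity in the annulus $\widehat D\setminus\operatorname{int}D$. The interpolation is controlled by a fixed cutoff, and invertibility follows from smallness. Work in Euclidean coordinates and write $q_j=\id+u_j$ on $D$, where $u_j:D\to\mathbb R^2$ satisfies $u_j\to0$ in $C^\infty(D,\mathbb R^2)$.

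\emph{Step 1.} Extend $u_j$ across $\partial D$. Fix a continuous linear extension operator $E:C^\infty(D,\mathbb R^2)\to C^\infty(\mathbb R^2,\mathbb R^2)$ (Seeley's extension, applied in polar coordinates near the round circle $\partial D$). $E$ is continuous for the $C^\infty$ topologies, so $Eu_j\to0$ in $C^\infty$ on every compact set, and $Eu_j|_D=u_j$. Fix a concentric disk $D'$ with $D\subset\operatorname{int}D'$ and $D'\subset\operatorname{int}\widehat D$. Choose $\beta\in C^\infty(\mathbb R^2,[0,1])$ with $\beta\equiv1$ on a neighborhood of $D$ and $\beta\equiv0$ outside $D'$. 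Then set
\[
 \widehat q_j=\id+\beta\,Eu_j\quad\text{on }\widehat D.
\]
By construction $\widehat q_j|_D=q_j$, and $\widehat q_j=\id$ on the fixed neighborhood $\widehat D\setminus D'$ of $\partial\widehat D$. Moreover, $\beta Eu_j\to0$ in $C^\infty(\widehat D)$, since multiplication by the fixed function $\beta$ is continuous. This gives $\widehat q_j\to\id_{\widehat D}$ in $C^\infty$.

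\emph{Step 2.} Show $\widehat q_j$ is a diffeomorphism of $\widehat D$ for large $j$. This is the only real point. Discard finitely many terms so that $\sup\|D(\beta Eu_j)\|<1/2$. The map $g_j:=\id+\beta Eu_j$ on $\mathbb R^2$ equals the identity outside $D'$. For $x\neq y$,
\[
 |g_j(x)-g_j(y)|\ge|x-y|-\tfrac12|x-y|>0,
\]
so $g_j$ is injective. It is a local diffeomorphism, since $\|Dg_j-I\|<1/2$. It is also proper. Hence $g_j$ is a diffeomorphism of $\mathbb R^2$, for instance by Hadamard's theorem; or directly, its image is open and closed. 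Because $g_j$ is the identity outside $D'\subset\operatorname{int}\widehat D$, it maps $\mathbb R^2\setminus\widehat D$ onto itself. Bijectivity then gives $g_j(\widehat D)=\widehat D$, so $\widehat q_j=g_j|_{\widehat D}\in\operatorname{Diff}^\infty(\widehat D)$.

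The main subtlety is Step 1. One must be sure that a continuous extension operator for the closed disk exists, and that convergence of $q_j$ in $\operatorname{Diff}^\infty(D)$ means $C^\infty$ convergence of $u_j$ up to the boundary. For a round disk Seeley's reflection construction in the radial variable handles the first point, so no additional smoothing argument should be needed. The remaining steps are routine smallness estimates, which is why only finitely many terms must be discarded.
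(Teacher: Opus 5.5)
Your proposal is correct and follows essentially the same route as the paper: you take a continuous linear Seeley-type extension of $u_j=q_j-\id$, multiply it by a fixed cutoff supported in $\operatorname{int}\widehat D$, and then prove invertibility exactly as the paper does (the $\tfrac12$-derivative injectivity estimate, properness, and open--closed surjectivity on $\mathbb R^2$). The only cosmetic difference is how the extension operator is built: you use a single polar-coordinate collar around the round circle, whereas the paper assembles it from finitely many half-space charts and a partition of unity.
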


\begin{proof}
We first construct a continuous linear extension operator for smooth vector
fields on $D$, with image supported in a fixed compact subset of
$\operatorname{int}\widehat D$.  We then apply it to $q_j-\id_D$ and use
$C^1$-smallness to obtain global diffeomorphisms.

Since $D$ is a compact subset of $\operatorname{int}\widehat D$, choose
$\zeta\in C_c^\infty(\mathbb R^2)$ such that $0\leq\zeta\leq1$,
$\operatorname{supp}\zeta\subset\operatorname{int}\widehat D$, and
$\zeta\equiv1$ on a neighborhood of $D$.

Let
\[
 \mathbb H^+=\{(x_1,x_2)\in\mathbb R^2:x_2\geq0\}.
\]
We use $\mathbb H^+$ for the closed half-space.  The notation
$C^\infty(\mathbb H^+)$ means Seeley's space $D_+$: the functions smooth on
the open half-space $\{x_2>0\}$ whose derivatives of every order extend
continuously to $\{x_2=0\}$ \cite[p.~625]{Seeley1964}.

To obtain an extension on $D$ from Seeley's theorem, fix a finite open cover
$U_0,U_1,\ldots,U_m$ of $D$ such that
$\overline{U_0}\subset\operatorname{int}D$ and, for every $i\geq1$, there is
a smooth chart
\[
 \kappa_i:U_i\longrightarrow V_i\subset\mathbb R^2
\]
satisfying
\[
 \kappa_i(U_i\cap D)=V_i\cap\mathbb H^+.
\]
For $\rho\in C^\infty(D)$, write
\[
 \operatorname{supp}_D\rho
 :=\overline{\{x\in D:\rho(x)\neq0\}}
\]
for its support relative to $D$.
Choose a smooth partition of unity $(\rho_i)_{i=0}^m$ subordinate to this
cover, so that each $\operatorname{supp}_D\rho_i$ is a compact subset of
$U_i\cap D$.  For every $i\geq1$, choose
$\theta_i\in C_c^\infty(U_i)$ such that $0\leq\theta_i\leq1$ and
$\theta_i\equiv1$ on a neighborhood of $\operatorname{supp}_D\rho_i$.

For $v\in C^\infty(D,\mathbb R^2)$, write
\[
 (\rho_i\cdot v)(x):=\rho_i(x)v(x)
 \qquad (x\in D,\ 0\leq i\leq m).
\]
We define $E_0(v)$ as follows.
Since $\operatorname{supp}_D\rho_0\subset U_0\cap D$ and
$\overline{U_0}\subset\operatorname{int}D$, the function $\rho_0\cdot v$
vanishes on a neighborhood of $\partial D$.  It therefore extends by zero to
a function
\[
 \widetilde v_0\in C_c^\infty(\mathbb R^2,\mathbb R^2).
\]
The map $v\mapsto\widetilde v_0$ is continuous and linear.

Let
\[
 \mathcal S:C^\infty(\mathbb H^+,\mathbb R^2)
 \longrightarrow C^\infty(\mathbb R^2,\mathbb R^2)
\]
be the continuous linear operator obtained by applying Seeley's extension
operator componentwise.  By construction, extending a function and then
restricting it back to $\mathbb H^+$ recovers the original function:
\[
 \mathcal S(w)|_{\mathbb H^+}=w
 \qquad
 \bigl(w\in C^\infty(\mathbb H^+,\mathbb R^2)\bigr).
\]

For every $i\geq1$, define $g_i:\mathbb H^+\to\mathbb R^2$ by
\[
 g_i(z)=
 \begin{cases}
  ((\rho_i\cdot v)\circ\kappa_i^{-1})(z),
    & z\in V_i\cap\mathbb H^+,\\[2mm]
  0, & z\in\mathbb H^+\setminus V_i.
 \end{cases}
\]
Since $\operatorname{supp}_D\rho_i$ is a compact subset of $U_i\cap D$,
this zero extension belongs to $C^\infty(\mathbb H^+,\mathbb R^2)$.  Define
\[
 \widetilde v_i(x)=
 \begin{cases}
  \theta_i(x)\,\mathcal S(g_i)\bigl(\kappa_i(x)\bigr),
    & x\in U_i,\\[2mm]
  0, & x\notin U_i.
 \end{cases}
\]
Since $\theta_i$ has compact support in $U_i$, the two branches fit together
smoothly.  Hence $\widetilde v_i\in
C_c^\infty(\mathbb R^2,\mathbb R^2)$, the map
$v\mapsto\widetilde v_i$ is continuous and linear.

On $D\cap U_i$, the
preceding restriction identity for $\mathcal S$ and the choice of $\theta_i$
give $\widetilde v_i=\rho_i\cdot v$.  If $x\in D\setminus U_i$, then the
definition of $\widetilde v_i$ and the subordination of $\rho_i$ give
\[
 \widetilde v_i(x)=0=(\rho_i\cdot v)(x).
\]
Therefore
\[
 \widetilde v_i|_D=\rho_i\cdot v.
\]

Set
\[
 E_0(v)=\sum_{i=0}^m\widetilde v_i.
\]
The preceding construction shows that
\[
 E_0:C^\infty(D,\mathbb R^2)\longrightarrow
 C^\infty(\mathbb R^2,\mathbb R^2)
\]
is continuous and linear, and
\[
 E_0(v)|_D=\sum_{i=0}^m(\rho_i\cdot v)=v.
\]

Define $E(v)=\zeta\cdot E_0(v)$.  Multiplication by the fixed smooth function
$\zeta$ is a continuous linear endomorphism of
$C^\infty(\mathbb R^2,\mathbb R^2)$; hence
\[
 E:C^\infty(D,\mathbb R^2)\longrightarrow
 C^\infty(\mathbb R^2,\mathbb R^2)
\]
is continuous and linear.  Moreover, $E(v)$ agrees with $v$ on $D$ and is
supported in the fixed compact set
$K_\zeta:=\operatorname{supp}\zeta\subset\operatorname{int}\widehat D$.

For
$x\in D$, set
\[
 u_j(x):=q_j(x)-x\in\mathbb R^2.
\]
Then $u_j\in C^\infty(D,\mathbb R^2)$ and, since $q_j\to\id_D$, we have
$u_j\to0$ in $C^\infty(D,\mathbb R^2)$.  Define
\[
 Q_j:\mathbb R^2\longrightarrow\mathbb R^2,
 \qquad
 Q_j(x):=x+E(u_j)(x).
\]
Since $E(u_j)|_D=u_j$, for $x\in D$ we have
\[
 Q_j(x)=x+u_j(x)=q_j(x),
\]
and hence $Q_j|_D=q_j$.  Moreover,
$\operatorname{supp}E(u_j)\subset K_\zeta$, so $Q_j$ is the identity outside
$K_\zeta$.
Consequently $Q_j$ is proper: if $C\subset\mathbb R^2$ is compact, then
$Q_j^{-1}(C)$ is closed and contained in the compact set
$C\cup K_\zeta$, and hence is compact.

Since $u_j\to0$ and $E$ is continuous and
linear,
\[
 E(u_j)\longrightarrow E(0)=0
 \quad\text{in }C^\infty(\mathbb R^2,\mathbb R^2).
\]
Because every $E(u_j)$ is supported in the same compact set $K_\zeta$, this also
gives
\[
 \lVert D E(u_j)\rVert_{C^0(\mathbb R^2)}
 =\lVert D E(u_j)\rVert_{C^0(K_\zeta)}\longrightarrow0.
\]

Thus, for all large $j$,
$\lVert D E(u_j)\rVert_{C^0(\mathbb R^2)}<1/2$, and
\[
 DQ_j(x)=I+D E(u_j)(x)
\]
is everywhere invertible.  Hence $Q_j$ is a local diffeomorphism.  It is
injective by the mean-value estimate
\[
 \lVert Q_j(x)-Q_j(y)\rVert
 \geq\tfrac12\lVert x-y\rVert.
\]

Since $Q_j$ is a local diffeomorphism, its image is open in $\mathbb R^2$.
Properness implies that $Q_j$ is a closed map, so its image is also closed.
Since $\mathbb R^2$ is connected, the nonempty image must be all of
$\mathbb R^2$.
Together with the preceding injectivity, this shows that $Q_j$ is bijective.
A bijective local diffeomorphism has a smooth inverse; hence $Q_j$ is a global
diffeomorphism of $\mathbb R^2$.

Because
$K_\zeta\subset\operatorname{int}\widehat D$, the complement
$\mathbb R^2\setminus\widehat D$ is pointwise fixed; bijectivity then gives
$Q_j(\widehat D)=\widehat D$.  Set
\[
 U_{\partial}:=\widehat D\setminus K_\zeta.
\]
This is a fixed relative neighborhood of $\partial\widehat D$.  For
$\widehat q_j:=Q_j|_{\widehat D}$ we have
\[
 \widehat q_j|_D=q_j,
 \qquad
 \widehat q_j|_{U_{\partial}}=\id_{U_{\partial}}.
\]

Moreover, the definition of $Q_j$ gives, for every $x\in\widehat D$,
\[
 \widehat q_j(x)-x=Q_j(x)-x=E(u_j)(x).
\]
The $C^\infty$ convergence $E(u_j)\to0$ established above, restricted to
$\widehat D$, yields
\[
 \widehat q_j\longrightarrow\id_{\widehat D}
 \quad\text{in }C^\infty.
\]
This proves the lemma.
\end{proof}

\begin{lemma}
\label[lemma]{lem:cyclic-renormalization}
Let $G:D_0\to D_0$ and $q:D_1\to D_1$ be diffeomorphisms of closed disks.
Suppose that for some $N\geq2$ and some smooth embedding
$\psi:D_0\to D_1$,
\[
 q^N\circ\psi=\psi\circ G,
 \qquad
 q^i(\psi(D_0))\cap\psi(D_0)=\varnothing
 \quad(0<i<N).
\]
Let $Z\subset\operatorname{int}D_0$ be compact with $G(Z)=Z$ and such that
$G|_Z$ is minimal and uniquely ergodic, with unique invariant probability
$\mu$ and entropy $h_\mu(G)=H$.  Then
\[
 X=\bigcup_{i=0}^{N-1}q^i(\psi(Z))
\]
is a compact $q$-invariant set on which $q$ is minimal, uniquely ergodic, and
has no fixed point.  Its unique invariant probability is
\[
 \eta=\frac1N\sum_{i=0}^{N-1}(q^i\circ\psi)_*\mu,
\]
and
\[
 h_\eta(q)=\htop(q|_X)=\frac{H}{N}.
\]
If $Z$ is a Cantor set, then so is $X$.
\end{lemma}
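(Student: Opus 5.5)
Set $Z_i:=q^i(\psi(Z))$ for $0\le i<N$; these are the ``levels'' of a tower over the base $Z_0=\psi(Z)$, and the proof works level by level.

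\emph{Structure of the tower.} Since $Z_0\subset\psi(D_0)$ and $q^i(\psi(D_0))\cap\psi(D_0)=\varnothing$ for $0<i<N$, applying $q^{-k}$ shows that the levels $Z_0,\dots,Z_{N-1}$ are pairwise disjoint compact sets. The relation $q^N\circ\psi=\psi\circ G$ together with $G(Z)=Z$ gives $q(Z_{N-1})=q^N(\psi(Z))=\psi(G(Z))=Z_0$. Hence $q(Z_i)=Z_{i+1}$ with indices taken mod $N$, and $X$ is compact and $q$-invariant. Moreover $q$ maps each level onto a different level and $N\ge2$, so $q$ has no fixed point on $X$. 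The map $\psi|_Z$ conjugates $G|_Z$ to $q^N|_{Z_0}$, and $q^i$ conjugates $q^N|_{Z_0}$ to $q^N|_{Z_i}$. Consequently every $q^N|_{Z_i}$ is minimal and uniquely ergodic, with unique invariant measure $\mu_i:=(q^i\circ\psi)_*\mu$. If $Z$ is a Cantor set, then so is each $Z_i$, and a finite disjoint union of Cantor sets is again a Cantor set.

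\emph{Minimality and unique ergodicity of $q|_X$.} Let $x\in Z_i$. Then $\{q^{Nk}x\}_{k\ge0}$ is dense in $Z_i$, and $q^j$ carries this subsequence densely into $Z_{i+j}$. Hence the $q$-orbit of $x$ is dense in $X$, and $q|_X$ is minimal. Now let $\nu$ be $q$-invariant on $X$. Invariance forces $\nu(Z_i)=\nu(q^{-1}Z_{i+1})=\nu(Z_{i+1})$, so each level has mass $1/N$. The normalized restriction $N\,\nu|_{Z_i}$ is $q^N$-invariant on $Z_i$, and therefore equals $\mu_i$. Thus $\nu=\eta$, which proves unique ergodicity with unique invariant probability $\eta$.

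\emph{Entropy.} By the power rule, $h_\eta(q)=\frac1N h_\eta(q^N)$. Since the levels $Z_i$ are $q^N$-invariant and disjoint, $\eta$ is the convex combination $\frac1N\sum_i\mu_i$ of $q^N$-invariant measures carried on these disjoint invariant pieces. Affinity of entropy (or simply additivity over an invariant partition) then gives
\[
h_\eta(q^N)=\frac1N\sum_i h_{\mu_i}(q^N|_{Z_i})=\frac1N\sum_i H=H,
\]
because each $q^N|_{Z_i}$ is conjugate to $G|_Z$. Therefore $h_\eta(q)=H/N$. Finally, unique ergodicity and the variational principle give $\htop(q|_X)=h_\eta(q)=H/N$.

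The one step needing care is the bookkeeping of the cyclic indices, specifically the identity $q(Z_{N-1})=Z_0$ and the disjointness of all pairs of levels, not only pairs involving $Z_0$. Both follow from the displayed hypotheses via the shift $q^{-k}$ argument given above.
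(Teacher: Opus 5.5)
Your proposal is correct and follows essentially the same route as the paper. Both arguments obtain pairwise disjoint cyclic levels via the $q^{-i}$ shift. Both get uniqueness of $\eta$ from the $1/N$ mass per level together with unique ergodicity of $q^N$ on each level. Both get $h_\eta(q)=\htop(q|_X)=H/N$ from affinity of entropy over the $q^N$-invariant decomposition, the power rule, and the variational principle.
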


\begin{proof}
Put $L_i=q^i(\psi(Z))$ for $0\leq i<N$, with indices read modulo $N$.
If $i<j$, then $L_i\cap L_j\ne\varnothing$ would, after applying $q^{-i}$,
contradict the hypothesis for $j-i$.  The conjugacy identity and $G(Z)=Z$
give $q(L_i)=L_{i+1}$, while $q^N|_{L_i}$ is conjugate to $G|_Z$.  Thus
$X=\bigsqcup_iL_i$ is compact and $q$-invariant, and every $q$-orbit is dense
in $X$.

Set $\nu_i=(q^i\circ\psi)_*\mu$.  Since $q_*\nu_i=\nu_{i+1}$ cyclically,
$\eta=N^{-1}\sum_i\nu_i$ is $q$-invariant.  Conversely, if $\rho$ is
$q$-invariant, cyclicity gives $\rho(L_i)=1/N$, while
$(\psi^{-1})_*(N\rho|_{L_0})$ is $G$-invariant and hence equals $\mu$.
Pushing this restriction around the tower gives $\rho=\eta$.

Each $(L_i,q^N,\nu_i)$ is conjugate to $(Z,G,\mu)$.  By affinity of entropy
on the finite $q^N$-invariant decomposition,
\[
 h_\eta(q^N)
 =\frac1N\sum_{i=0}^{N-1}h_{\nu_i}(q^N)
 =H.
\]
The iterate formula gives $h_\eta(q)=H/N$, and unique ergodicity plus the
variational principle gives $\htop(q|_X)=h_\eta(q)$.  Finally,
$q(L_i)=L_{i+1}\ne L_i$ excludes fixed points, and if $Z$ is Cantor, so is
$X=\bigsqcup_iL_i$.
\end{proof}

\begin{lemma}
\label[lemma]{lem:small-localized-implantation}
Let $D=\overline B(c,r)\subset\mathbb R^2$ be a closed disk, and let
$f_j\in\operatorname{Diff}^\infty(D)$ all be equal to the identity on one
fixed neighborhood of $\partial D$, and suppose that $f_j\to\id_D$ in
$C^\infty$.  Let the nonempty compact $f_j$-invariant sets
$X_j\subset\operatorname{int}D$ lie in one fixed compact subset of
$\operatorname{int}D$, and suppose that $f_j|_{X_j}$ has no fixed point.  For
every integer $r_0\geq0$, after discarding finitely many terms and
reindexing, there are a fixed closed smooth $(r_0+2)$-disk
$D^\sharp$, a fixed smooth embedding $\kappa:D\to D^\sharp$, and
diffeomorphisms $H_j$ of $D^\sharp\times S^1$, all equal to the identity
on one fixed boundary neighborhood, such that $H_j$ and
\[
 K_j=\kappa(X_j)\times\{\pi/2\}
\]
have all the conclusions of \cref{prop:localized-isolation}, with
$f_j|_{X_j}$ as the dynamics on $K_j$, and
\[
 H_j\longrightarrow\id_{D^\sharp\times S^1}
 \quad\text{in }C^\infty.
\]
The relative-boundary isotopies $(H_{j,\lambda})_{0\leq\lambda\leq1}$ from
the identity to $H_j$ may be chosen so that the evaluation maps
$(\lambda,z)\mapsto H_{j,\lambda}(z)$ converge jointly in $C^\infty$ to
$(\lambda,z)\mapsto z$.
\end{lemma}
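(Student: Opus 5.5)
The plan is to run the proof of \cref{prop:localized-isolation} uniformly in $j$, feeding it isotopies supplied by the sequential part of \cref{lem:relative-boundary-thickening} and inserting a vanishing parameter into the circle drift. The first step is to produce, for each large $j$, a relative-boundary isotopy from $\id_D$ to $f_j$ that converges jointly in $C^\infty$ to the trivial isotopy. Since $f_j\to\id_D$ in $C^\infty$ and all $f_j$ are the identity on a fixed collar $C$, I will use the straight-line homotopy in the coordinates of $\mathbb R^2$:
\[
 I^j_s(x)=x+\beta(s)\bigl(f_j(x)-x\bigr),
\]
where $\beta$ is the endpoint-stationary reparametrization used in \cref{prop:localized-isolation}. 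Because $D$ is convex, $I^j_s(D)\subset D$. Each $I^j_s$ is the identity on $C$, and its derivative is $I+\beta(s)(Df_j-I)$, which is invertible once $\|Df_j-I\|<1/2$. The argument from the proof of \cref{lem:relative-extension} (properness, local diffeomorphism, connectedness, injectivity via the mean-value bound) then shows that every $I^j_s$ is a diffeomorphism of $D$. The evaluation maps converge jointly in $C^\infty$ to $(s,x)\mapsto x$, since they are linear in $f_j-\id$.

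Next I apply the sequential part of \cref{lem:relative-boundary-thickening} with $r=r_0$. This gives a fixed disk $D^\sharp$, a fixed embedding $\kappa$, and isotopies $\overline I^j_s$ converging to the trivial isotopy. Put $\overline f_j=\overline I^j_1$. Then $\overline f_j\circ\kappa=\kappa\circ f_j$, so $\kappa(X_j)$ is invariant and $\overline f_j$ has no fixed point on it.

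The main obstacle is that the construction in \cref{prop:localized-isolation} uses the drift $\varepsilon a_b$ with a fixed $\varepsilon$, so $R$ does not tend to the identity, and its auxiliary data $b$, $\rho$ depend on the set $X$. I intend to handle this as follows. Choose one cutoff $b$ on $D^\sharp$ that equals $1$ on a neighborhood of the fixed compact set containing $\kappa(\text{joint supports})\cup\kappa(X_j)$ and vanishes on a fixed collar. The joint supports of the $\overline I^j$ lie in the fixed compact set $K_0$ from the lemma's proof, so this is possible. For each $j$ take $\rho_j$ with $\rho_j^{-1}(0)=\kappa(X_j)$ from \cref{lem:zero-set}. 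Then replace $\varepsilon$ by $\varepsilon_j\to0$, chosen so small that
\[
 \varepsilon_j\,\|b\,\sin u\,(\rho_j+1-\sin u)\|_{C^j}<1/j
 \qquad\text{and}\qquad
 \varepsilon_j<(\max\rho_j+4)^{-1}.
\]
The equality-set computation in the proposition used only $0<\varepsilon<(M_\rho+4)^{-1}$. Hence every conclusion of \cref{prop:localized-isolation} holds verbatim for $H_j=R_j\circ F_j$, where $F_j(x,\theta)=(\overline I^j_{\sin^2\theta}(x),\theta)$. This includes the isolation of $K_j$, the measure classification, and the entropy formulas.

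Convergence then follows. Precomposition with the fixed smooth map $(x,\theta)\mapsto(\sin^2\theta,x)$ gives $F_j\to\id$ in $C^\infty$. The $C^j$ estimate on the drift gives $R_j\to\id$, and composition is continuous, so $H_j\to\id$. For the isotopies, take $H_{j,\lambda}=R_{j,\lambda}\circ F_{j,\lambda}$ as in the proposition. These are jointly smooth in $\lambda$, so the same precomposition and composition-continuity argument, now on $[0,1]\times D^\sharp\times S^1$, yields joint $C^\infty$ convergence. All maps are the identity on the fixed collar where $b=0$ and where every $\overline I^j_s$ is trivial. Discarding the finitely many $j$ for which invertibility fails completes the argument.
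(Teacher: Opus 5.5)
Your proposal is correct and is essentially the paper's proof. It uses the same straight-line isotopy $I^j_s(x)=x+\beta(s)(f_j(x)-x)$, the sequential part of \cref{lem:relative-boundary-thickening}, and the construction of \cref{prop:localized-isolation} with a drift parameter $\varepsilon_j<(\max\rho_j+4)^{-1}$ chosen small in $C^j$, and then concludes by continuity of precomposition and composition.

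The differences are cosmetic. You get bijectivity of $I^j_s$ from the mean-value and open--closed argument of \cref{lem:relative-extension}, after extending by the identity outside $D$, instead of the proper-covering argument. You use one fixed cutoff $b$ equal to $1$ near $K_0\cup\kappa(\text{fixed compact set})$ rather than $j$-dependent $b_j$. You also leave implicit the reparametrization that makes the thickened isotopies $\overline I^j_s$ stationary near $s=1$; they are not stationary there, because $s\gamma(z)$ still varies. That reparametrization is done anyway at the start of the proposition's proof, and it preserves joint convergence.
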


\begin{proof}
Choose one endpoint-stationary smooth function
$\beta:[0,1]\to[0,1]$ which is identically zero near $0$ and identically one
near $1$.  For every $j$, define
\[
 I_s^j(x)=x+\beta(s)(f_j(x)-x)
 \qquad(0\leq s\leq1).
\]
Since $D$ is convex,
\[
 I_s^j(x)=(1-\beta(s))x+\beta(s)f_j(x)\in D.
\]
Since $f_j\to\id_D$ in $C^1$, after discarding finitely many terms and
reindexing one has, for every $j$,
\[
 D_xI_s^j=I+\beta(s)\bigl(Df_j(x)-I\bigr)
\]
invertible uniformly in $(s,x)$.  By hypothesis, $I_s^j$ is the identity on
one fixed neighborhood of $\partial D$, independently of $j$ and $s$.
Together with the preceding derivative estimate, this makes $I_s^j:D\to D$
a local diffeomorphism of manifolds with boundary; since $D$ is compact, it is
proper.  The
proper-local-diffeomorphism criterion
\cite[Proposition~4.46]{Lee2013} shows that it is a smooth covering of $D$.
Since the base $D$ is simply connected and the domain $D$ is connected, this
covering is bijective; hence $(I_s^j)_{0\leq s\leq1}$ is a
relative-boundary isotopy from $\id_D$ to $f_j$.  Moreover,
$\beta^{(0)}:=\beta$ and, for every multi-index $\alpha$ and every
$\ell\geq0$,
\[
 \partial_s^\ell\partial_x^\alpha\bigl(I_s^j(x)-x\bigr)
 =\beta^{(\ell)}(s)
   \partial_x^\alpha\bigl(f_j(x)-x\bigr).
\]
Therefore
$I_s^j\to\id_D$ jointly in $(s,x)$ in the $C^\infty$ topology.

Apply
\cref{lem:relative-boundary-thickening} with $r=r_0$.  It gives a fixed
$(r_0+2)$-disk $D^\sharp$, a fixed central-slice embedding
$\kappa:D\to D^\sharp$, and relative-boundary isotopies
$(\overline I_s^j)$ such that
\[
 \overline I_s^j\circ\kappa=\kappa\circ I_s^j,
 \qquad
 (s,y)\longmapsto\overline I_s^j(y)
   \longrightarrow (s,y)\longmapsto y
\]
in $C^\infty([0,1]\times D^\sharp,D^\sharp)$.  Put
\[
 \overline f_j=\overline I_1^j,
 \qquad
 \overline X_j=\kappa(X_j).
\]
After composing the parameter of every $\overline I_s^j$ with one fixed
smooth function that is $0$ near $0$ and $1$ near $1$, and retaining the
same notation, we may also assume that these isotopies are stationary near
both endpoints.  This reparametrization preserves their endpoints, the
central-slice identity, and the preceding joint convergence.
Then $\overline X_j$ is compact and $\overline f_j$-invariant,
$\overline f_j|_{\overline X_j}$ is conjugate to $f_j|_{X_j}$ through
$\kappa$, and all the sets $\overline X_j$ lie in one fixed compact subset
of $\operatorname{int}D^\sharp$.  The angle-dependent maps
\[
 F_j(y,\theta)=\bigl(\overline I^j_{\sin^2\theta}(y),\theta\bigr)
\]
used in the proof of \cref{prop:localized-isolation} therefore converge to
the identity.

The thickening construction provides a fixed neighborhood $U_0$ of
$\partial D^\sharp$ on which every $\overline I_s^j$ is the identity.  Let
$K_0\subset\operatorname{int}D^\sharp$ be a fixed compact set containing
every $\overline X_j$, and choose a fixed boundary collar $U_\partial$ such
that $\overline{U_\partial}\subset U_0$ and
$\overline{U_\partial}\cap K_0=\varnothing$.  Hence the joint support
\[
 C_j=\overline{\{y\in D^\sharp:\overline I_s^j(y)\ne y
                 \text{ for some }s\in[0,1]\}}
\]
and the set $\overline X_j$ are both disjoint from
$\overline{U_\partial}$.  Choose $b_j$ as in the proof of
\cref{prop:localized-isolation}, equal to $1$ near
$C_j\cup\overline X_j$ and vanishing on $U_\partial$.  Choose $\rho_j$ so
that $\rho_j^{-1}(0)=\overline X_j$, and write $R_{j,\varepsilon}$ for the
corresponding fiber drift with parameter $\varepsilon$.

For fixed $j$, one has $R_{j,\varepsilon}\to\id$ in $C^\infty$ as
$\varepsilon\downarrow0$.  More precisely, write
\[
 a_{j,b_j}(y,u)
 =b_j(y)\sin u\bigl(\rho_j(y)+1-\sin u\bigr).
\]
The evaluation map of the scaled fiber path is
\[
 \mathcal R_{j,\varepsilon}(\lambda,y,p(u))
  =\bigl(y,p(u+\lambda\varepsilon a_{j,b_j}(y,u))\bigr).
\]
For fixed $j$, as $\varepsilon\downarrow0$ these maps converge in
$C^\infty([0,1]\times D^\sharp\times S^1,D^\sharp\times S^1)$ to
$(\lambda,y,\theta)\mapsto(y,\theta)$, because on the lifts their difference
is $\lambda\varepsilon a_{j,b_j}(y,u)$ and all its mixed derivatives tend
uniformly to zero.  Hence we may choose
\[
 0<\varepsilon_j<(\max_{D^\sharp}\rho_j+4)^{-1}
\]
so small that the evaluation maps of
$(R_{j,\lambda\varepsilon_j})_{0\leq\lambda\leq1}$ are within $1/j$ of the
identity evaluation map in $C^j$.  All derivative and equality-set arguments
in \cref{prop:localized-isolation} remain valid for this choice.

For
$0\leq\lambda\leq1$, put
\[
 F_{j,\lambda}(y,\theta)
   =\bigl(\overline I^j_{\lambda\sin^2\theta}(y),\theta\bigr),
 \qquad
 H_{j,\lambda}
   =R_{j,\lambda\varepsilon_j}\circ F_{j,\lambda}.
\]
These are smooth relative-boundary isotopies, and
$H_j=H_{j,1}$ has all the conclusions of
\cref{prop:localized-isolation}.  The joint convergence of
$\overline I_s^j$ and the choice of $\varepsilon_j$ give
\[
 (\lambda,z)\longmapsto H_{j,\lambda}(z)
   \longrightarrow(\lambda,z)\longmapsto z
\]
in $C^\infty([0,1]\times(D^\sharp\times S^1),
D^\sharp\times S^1)$.  In particular,
$H_j\to\id_{D^\sharp\times S^1}$ in $C^\infty$.
\end{proof}

The preceding three lemmas provide the relative extension, cyclic entropy
scaling, and localized implantation needed for the near-identity construction.

\begin{proof}[Proof of \cref{thm:near-identity}]
Fix $H_*>0$.  Apply \cref{lem:entropy-source} to the one-point simplex with
entropy function equal to $H_*$.  This gives a minimal uniquely ergodic
Toeplitz subshift $(Y,\sigma)$ whose unique invariant probability has entropy
$H_*$.  Let the orientation-preserving $C^\infty$ diffeomorphism
$f:S^2\to S^2$, the set $X$, the conjugacy $\chi:Y\to X$, and the $k$-leg horseshoe
$\Lambda\supset X$ be supplied by
\cref{lem:horseshoe-embedding}.  By the final clause of that lemma, choose a
closed disk $E\subset S^2$ such that
\[
 \operatorname{supp}(f)\cup\Lambda\subset\operatorname{int}E.
\]

Then $f$ is the identity on a neighborhood of
$S^2\setminus\operatorname{int}E$.  In particular,
$f(S^2\setminus E)=S^2\setminus E$, so bijectivity gives $f(E)=E$.

Choose a smooth identification
\[
 \Theta:E\longrightarrow D=\overline B(c,r)\subset\mathbb R^2
\]
and set
\[
 G=\Theta\circ f|_E\circ\Theta^{-1},
 \qquad
 Z=\Theta(X).
\]
Then $G:D\to D$ is orientation preserving and equal to the identity near
$\partial D$, while $Z\subset\operatorname{int}D$ is a $G$-invariant Cantor
set and $\Theta\circ\chi$ conjugates $\sigma|_Y$ to $G|_Z$.  Finally, the
constant sequences in the full $k$-shift give hyperbolic fixed points in
$\Theta(\Lambda)$; fix one and denote it by $p$.

Let $(\mathcal U_j)$ be a decreasing countable neighborhood basis of $\id_D$
in $\operatorname{Diff}^\infty(D)$.  Put $B^2=\overline B(0,1)$, and let
\[
 A:B^2\longrightarrow D,
 \qquad
 A(x)=c+rx,
\]
be the orientation-preserving affine diffeomorphism.  Conjugation by $A$ is
a homeomorphism from $\operatorname{Diff}^\infty(B^2)$ to
$\operatorname{Diff}^\infty(D)$ in the $C^\infty$ topology.

Apply the
$C^\infty$ case of Theorem~A of Berger, Gourmelon, and Helfter on primitive
renormalizations
\cite[Definition~1.1 and Theorem~A]{BergerGourmelonHelfter2025} to
$A^{-1}\circ G\circ A$, with the neighborhood
\[
 A^{-1}\mathcal U_jA
 :=\{A^{-1}\circ q\circ A:q\in\mathcal U_j\},
\]
and conjugate the resulting data back by $A$.  We obtain
$q_j\in\mathcal U_j$, integers
$N_j\geq2$, and smooth embeddings $\psi_j:D\to D$ such that
\[
 q_j^{N_j}\circ\psi_j=\psi_j\circ G,
 \qquad
 q_j^i(\psi_j(D))\cap\psi_j(D)=\varnothing
 \quad(0<i<N_j).
\]
Since $(\mathcal U_j)$ is a decreasing neighborhood basis,
\[
 q_j\longrightarrow\id_D\quad\text{in }C^\infty.
\]

We claim that $N_j\to\infty$.  Otherwise, $(N_j)$ has a bounded subsequence;
after passing further, we may assume that $N_j=N$ is constant.  Put
$z_j=\psi_j(p)$.  Since $G(p)=p$, differentiating
$q_j^N\circ\psi_j=\psi_j\circ G$ at $p$ gives
\[
 D(q_j^N)_{z_j}
 =D(\psi_j)_p\,DG_p\,D(\psi_j)_p^{-1}.
\]
For fixed $N$, the $C^1$ continuity of the iteration map gives
$q_j^N\to\id_D$ in $C^1$.  Hence
\[
 \sup_{z\in D}\bigl\|D(q_j^N)_z-\operatorname{Id}\bigr\|
 \longrightarrow 0,
\]
and therefore $D(q_j^N)_{z_j}\to\operatorname{Id}$.  On the other hand, the
identity obtained by differentiating
$q_j^N\circ\psi_j=\psi_j\circ G$ at $p$ shows that every
$D(q_j^N)_{z_j}$ is similar to $DG_p$ and hence has the same characteristic
polynomial as $DG_p$.  Since the
characteristic polynomial depends continuously on the matrix entries,
passing to the limit would give
\[
 \det\bigl(t\operatorname{Id}-DG_p\bigr)=(t-1)^2,
\]
contradicting the hyperbolicity of $DG_p$.  Thus $N_j\to\infty$.

Choose a larger concentric closed disk $\widehat D$ such that
$D\subset\operatorname{int}\widehat D$.  Since $q_j\to\id_D$ in the
$C^\infty$ topology, \cref{lem:relative-extension} applies.  After discarding
finitely many terms and reindexing, let
$\widehat q_j\in\operatorname{Diff}^\infty(\widehat D)$ be the extensions
furnished by that lemma.  Thus
\[
 \widehat q_j|_D=q_j,
 \qquad
 \widehat q_j\longrightarrow\id_{\widehat D}\quad\text{in }C^\infty,
\]
and all $\widehat q_j$ equal $\id_{\widehat D}$ on one fixed neighborhood of
$\partial\widehat D$.  Since $q_j(D)=D$, one has
$\widehat q_j^i|_D=q_j^i$ for every $i\geq0$.  As $\psi_j(D)\subset D$, it
follows that
\[
 \widehat q_j^{N_j}\circ\psi_j=\psi_j\circ G,
 \qquad
 \widehat q_j^i\bigl(\psi_j(D)\bigr)\cap\psi_j(D)=\varnothing
 \quad(0<i<N_j).
\]

Define
\[
 X_j=\bigcup_{i=0}^{N_j-1}\widehat q_j^i(\psi_j(Z)),
 \qquad
 c_j=\frac{H_*}{N_j}.
\]
By \cref{lem:cyclic-renormalization}, $X_j$ is a minimal uniquely ergodic
invariant Cantor set without fixed points, and its unique invariant
probability, denoted by $\eta_j$, has entropy $c_j$.  Since $N_j\to\infty$,
one has $c_j\to0$.

Thus, with the fixed compact subset
$D\subset\operatorname{int}\widehat D$, all the hypotheses of
\cref{lem:small-localized-implantation} are satisfied by
$(\widehat q_j,X_j)$.  After
discarding finitely many further terms and reindexing, apply that lemma with
$r_0=d-3$.  We obtain a fixed
$(d-1)$-disk $D^\sharp$, a fixed central-slice embedding
$\kappa:\widehat D\to D^\sharp$, maps
$H_j\to\id_{D^\sharp\times S^1}$, and isolated minimal Cantor sets
$\widehat K_j=\kappa(X_j)\times\{\pi/2\}$.  Since the dynamics on
$\widehat K_j$ is conjugate to $\widehat q_j|_{X_j}$, the conclusions of
\cref{prop:localized-isolation} give
\[
 \begin{aligned}
 \Me(H_j)
   &=\{\delta_z:z\in\operatorname{Fix}(H_j)\}\cup\{\widehat\nu_j\},\\
 \He(H_j)&=\{0,c_j\},\qquad \htop(H_j)=c_j,
 \end{aligned}
\]
where $\widehat\nu_j$ is the unique invariant probability supported on
$\widehat K_j$.  More explicitly, for
\[
 \widehat\kappa_j:X_j\longrightarrow\widehat K_j,
 \qquad
 \widehat\kappa_j(x)=\bigl(\kappa(x),\pi/2\bigr),
\]
one has
\[
 \widehat\nu_j=(\widehat\kappa_j)_*\eta_j,
 \qquad
 h_{\widehat\nu_j}(H_j)=h_{\eta_j}(\widehat q_j)=c_j.
\]

As in the proof of \cref{thm:main}, a coordinate $d$-ball in $M$ contains a
tubular neighborhood of a standard circle.  Fix a smooth embedding
$\jmath:D^\sharp\times S^1\to M$, conjugate $H_j$ by $\jmath$ on the fixed
solid torus $N=\jmath(D^\sharp\times S^1)$, and extend it by the identity on
$M\setminus N$.  Denote the resulting diffeomorphism by $h_j$, and put
$K_j=\jmath(\widehat K_j)$ and $\nu_j=\jmath_*\widehat\nu_j$.  Since every
$H_j$ is the identity on one fixed neighborhood of
$\partial D^\sharp\times S^1$, the extension is smooth across
$\partial N=\jmath(\partial D^\sharp\times S^1)$, and $h_j$ is the identity on
$M\setminus\operatorname{int}N$.  Moreover, since $\jmath$ is fixed, the
convergence
$H_j\to\id_{D^\sharp\times S^1}$ in $C^\infty$ implies
$h_j\to\id_M$ in $C^\infty$.
Conjugacy by $\jmath$ also shows that
$K_j\subset\operatorname{int}N$ is isolated, minimal, and $h_j$-invariant,
and that $\nu_j$ is its unique invariant probability.

By construction, $h_j(N)=N=h_j^{-1}(N)$.  If $\mu$ is an ergodic
$h_j$-invariant probability, then $\mu(N)\in\{0,1\}$.  In the zero-mass case,
$h_j=\id$ $\mu$-almost everywhere, so ergodicity forces
$\mu=\delta_z$ for a fixed point $z$.  In the full-mass case, conjugacy by
$\jmath$ reduces the measure to the local classification of $H_j$.
Consequently, entropy invariance under conjugacy and the variational principle
give
\[
 \begin{aligned}
 \Me(h_j)&=\{\delta_z:z\in\operatorname{Fix}(h_j)\}\cup\{\nu_j\},
 & h_{\nu_j}(h_j)&=c_j,\\
 \He(h_j)&=\{0,c_j\},
 & \htop(h_j)&=c_j.
 \end{aligned}
\]

Finally, \cref{lem:small-localized-implantation} supplies a smooth
relative-boundary isotopy from $\id_{D^\sharp\times S^1}$ to $H_j$.
Write it as $(H_{j,\lambda})_{0\leq\lambda\leq1}$ and define
\[
 h_{j,\lambda}(x)=
 \begin{cases}
  \jmath\circ H_{j,\lambda}\circ\jmath^{-1}(x),&x\in N,\\
  x,&x\in M\setminus N.
 \end{cases}
\]
All $H_{j,\lambda}$ are the identity on the same boundary collar, so this
piecewise family is jointly smooth in $(\lambda,x)$ and has endpoints
$\id_M$ and $h_j$.
Therefore $h_j\in\operatorname{Diff}_0^\infty(M)$.

Since $\id_M$ has zero topological
entropy, it satisfies \eqref{IE} vacuously, whereas every $h_j$ omits all
ergodic entropy values in $(0,c_j)$.  Thus the class satisfying \eqref{IE} is
not $C^\infty$ open.
\end{proof}

\medskip
\noindent\textbf{Acknowledgments.}
W. Lin is supported by the National Natural Science Foundation of China (No.~124B2010).  X. Tian is supported by the National Natural Science Foundation of China (No.~12471182).

\end{document}